\documentclass[a4paper,fleqn,review]{cas-sc}

\usepackage[numbers,sort&compress]{natbib}
\usepackage{mathtools,amssymb,graphicx,physics,amsmath,amsfonts,mathrsfs}
\usepackage{bm}
\usepackage{booktabs,multirow,array}
\usepackage{xcolor}
\usepackage[nice]{nicefrac}
\usepackage{cleveref}
\usepackage{amsthm}

\newtheorem{lemma}{Lemma}[section]
\newtheorem{theorem}{Theorem}[section]
\theoremstyle{remark}
\newtheorem{remark}{Remark}[section]

\newcommand{\jmp}[1]{[\![#1]\!]}        

\def\calT{{\mathcal T}}

\renewcommand{\qedsymbol}{}

\begin{document}
	\let\WriteBookmarks\relax
	\def\floatpagepagefraction{1}
	\def\textpagefraction{.001}
	
	\shorttitle{Conservative DG via Implicit Penalization for gKdV and HS--KdV}
	\shortauthors{M.S. Tariq et~al.}
	
	\title [mode = title]{High-order Conservative Discontinuous Galerkin Methods via Implicit Penalization for the Generalized Korteweg--de Vries Equation and the Hirota--Satsuma KdV System}
	
	\author[1]{M. Shan Tariq}[orcid=0009-0006-9831-8966]
	\ead{mtariq4@umassd.edu}
	\credit{Conceptualization, Methodology, Software, Writing - Original Draft}
	
	\author[1]{Yanlai Chen}
	\ead{yanlai.chen@umassd.edu}
	\credit{Conceptualization, Writing - Review \& Editing}
	
	\author[1]{Bo Dong}
    \cormark[1]
	\ead{bdong@umassd.edu}
	\credit{Conceptualization, Methodology, Supervision, Writing - Review \& Editing}

	\affiliation[1]{organization={Department of Mathematics, University of Massachusetts Dartmouth},
		addressline={285 Old Westport Road}, 
		city={North Dartmouth},
		state={MA},
		postcode={02747-2300}, 
		country={USA}}
	
	\cortext[cor1]{Corresponding author}
	
	\begin{abstract}
		We develop new conservative discontinuous Galerkin (DG) methods for nonlinear wave problems, focusing on the generalized Korteweg--de Vries (gKdV) equation and the coupled Hirota--Satsuma KdV (HS-KdV) system. The proposed methods preserve mass through the single-valued structure of numerical traces, while energy and Hamiltonian conservation are enforced by implicitly determining penalty parameters in the numerical traces through auxiliary conservation constraints. In our previous work \cite{ChenDongPereira2022}, we developed a conservative DG method for the gKdV equation; however, that formulation involves the time derivative of the jump of the approximate solution, which complicates extensions beyond second-order temporal accuracy. Our new formulation overcomes this limitation by introducing a redesigned trace configuration that eliminates the derivative-of-jump term. This novel enhancement seamlessly paves the way for higher-order time discretizations and requires solving fewer nonlinear systems per time step than the previous approach. For the coupled HS-KdV system, we present the first conservative DG method that preserves all three invariants of the exact solution. Numerical results demonstrate the accuracy and expected convergence behavior of the proposed methods, as well as long-time stability and strong conservation properties for both the gKdV equation and HS-KdV system.
	\end{abstract}
	
	\begin{keywords}
		Generalized Korteweg--de Vries equation \sep Hirota--Satsuma KdV system \sep Discontinuous Galerkin method \sep Conservative schemes \sep Implicit penalization \sep IRK4
	\end{keywords}
	
\maketitle

\section{Introduction}

In this paper, we study structure-preserving discontinuous Galerkin (DG) methods for two closely related nonlinear dispersive models: the generalized Korteweg--de Vries (gKdV) equation and the coupled Hirota--Satsuma Korteweg--de Vries (HS-KdV) system. The two developments are unified through a common philosophy of enforcing multiple conservation laws at the discrete level via suitably designed numerical traces and implicit penalization.

The gKdV equation is given by the following third-order nonlinear dispersive equation:
\begin{equation}\label{eq:gkdv}
    u_t+\epsilon u_{xxx}+\partial_x f(u)=g(x,t),
    \qquad x\in\Omega=[a,b], \quad t>0,
\end{equation}
with periodic boundary conditions and the initial condition $u(x,0)=u_0(x)$. 
Here, \(f(u)\) is typically a polynomial in \(u\). In the particular case \(\epsilon=1\), \(f(u)=3u^2\), and \(g\equiv0\), Eq. \eqref{eq:gkdv} reduces to the classical KdV equation. The KdV equation and its generalized variants are fundamental nonlinear dispersive models arising in the study of shallow-water waves, internal waves, plasma physics, nonlinear optics, and related areas of applied mathematics and mathematical physics \cite{KortewegDeVries1895,HammackSegur1974,HammackSegur1978,GardnerMorikawa1960,HelfrichWhitehead1990}. Due to the interplay between nonlinearity and dispersion, these equations support solitary waves, cnoidal waves, and other coherent structures whose long-time dynamics have been studied extensively from both analytical and numerical viewpoints.

For the homogeneous problem with \(g\equiv0\), the exact solution of \eqref{eq:gkdv} preserves several basic invariants, among which the most important are mass, $L^2$ energy and Hamiltonian:
\[
	\mathcal{M}(u) = \int_\Omega u\,dx, \qquad
	\mathcal{E}(u) = \int_\Omega u^2\,dx, \qquad
	\mathcal{H}(u) = \int_\Omega \left(\frac{\epsilon}{2}u_x^2 - V(u)\right)\,dx,
\]
where \(V(\cdot)\) is an antiderivative of \(f(\cdot)\). 
These invariants play a central role in the qualitative behavior of solutions and motivate the construction of structure-preserving numerical schemes.

A broad range of numerical methods has been developed for KdV-type equations, including finite difference methods \cite{Vliegenthart1971,Goda1977,LiVisbal2006}, finite element methods \cite{Winther1980,ArnoldWinther1982,SanzSernaChristie1981,BakerDougalisKarakashian1983}, spectral and pseudospectral methods \cite{FornbergWhitham1978,HuangSloan1992,GuoShen2001,MaSun2001}, and operator-splitting methods \cite{HoldenKarlsenRisebro1999,HoldenKarlsenRisebroTao2011}. Among high-order spatial discretizations, discontinuous Galerkin (DG) and local discontinuous Galerkin (LDG) methods are especially attractive because they combine high-order accuracy with compact stencils, local conservation, flexibility on nonuniform meshes, and the ability to vary the polynomial degree locally \cite{YanShu2002,XuShu2005,XuShu2012,Shu2009}.

For gKdV equations, a substantial body of work has focused on invariant-preserving DG discretizations. Bona \textit{et al.}~\cite{BonaChenKarakashianXing2013}, Yi \textit{et al.}~\cite{YiHuangLiu2013}, Karakashian and Xing \cite{karakashian2016posteriori}, and Fu and Shu \cite{FuShu2019} developed conservative DG or LDG formulations that preserve mass and the \(L^2\) energy. Liu and Yi \cite{liu2016hamiltonian} introduced a Hamiltonian-preserving DG method for the generalized KdV equation, while Zhang and Xia \cite{zhang2019conservative} proposed two LDG variants, one preserving the \(L^2\) energy and the other preserving the Hamiltonian. The \(H^2\)-conservative DG method of Chen, Cockburn, and Dong \cite{ChenCockburnDong2016}, although derived for third-order linear equations rather than the nonlinear gKdV equation itself, is also an important methodological precursor in the development of conservative DG discretizations for odd-order dispersive problems. More recent developments include comparative studies of DG formulations for gKdV blow-up and soliton dynamics \cite{HongWeiZhao2024}, IMEX-LDG analyses for linearized KdV equations \cite{WangTaoShuZhang2024}, structure-preserving LDG methods for stochastic KdV equations \cite{LiuYangMaDing2024}, and bound-preserving LDG discretizations for KdV-type models \cite{BiZhao2025}.

These studies reveal a fundamental limitation of classical DG formulations: they typically preserve mass together with either the \(L^2\) energy or the Hamiltonian, but not both simultaneously, which leaves open the question of fully conservative discretizations for gKdV-type equations.
A notable advance was made by Chen, Dong, and Pereira \cite{ChenDongPereira2022}, who proposed an implicitly penalized DG method for the generalized KdV equation in which stabilization parameters are treated as additional unknowns and are determined through auxiliary conservation constraints. Their formulation appears to be the first DG method that simultaneously preserves mass, \(L^2\) energy, and Hamiltonian for the generalized KdV equation.

Despite its strong conservation properties, the scheme presented in \cite{ChenDongPereira2022} poses certain implementation difficulties. In particular, the discrete Hamiltonian relation contains a term involving the time derivative of the jump of the numerical solution. This complicates the fully discrete formulation and limits the use of higher-order implicit time integrators. 
Motivated {to address this limitation}, we revisit the admissible structure of numerical traces for Hamiltonian conservation and propose a revised numerical trace configuration. The new construction eliminates the time-derivative-of-jump term while preserving all conservation properties, and is naturally compatible with higher-order implicit time discretizations. The resulting scheme reduces the nonlinear coupling at each time step and leads to a more efficient fully discrete implementation, while retaining the fully conservative philosophy of \cite{ChenDongPereira2022}.

We also consider the Hirota--Satsuma coupled KdV system
\begin{equation}\label{eq:hskdv}
\begin{aligned}
    u_t &= a\left(u_{xxx}+6uu_x\right)+2bvv_x,\\
    v_t &= -v_{xxx}-3uv_x,
\end{aligned}
\end{equation}
on interval \(x\in\Omega=[x_L, x_R]\) 
with periodic boundary conditions and initial conditions $u(x,0)=u_0(x)$ and $v(x,0)=v_0(x)$.
The HS--KdV system, originally introduced by Hirota and Satsuma \cite{hirota1981soliton}, models the interaction of long waves with different dispersion relations and occupies an important place in the theory of integrable systems. Beyond the original formulation, the HS--KdV system and its generalizations have been studied extensively from the analytical point of view, including integrability tests, Lax-pair constructions, Miura transformations, exact traveling-wave solutions, and multi-component extensions \cite{Sakovich1999,WuEtAl1999,ChenEtAl2014}. In the normalization adopted in this paper, the system possesses three basic conserved quantities
\begin{align*}
\mathcal{M}(u)=\int_\Omega u\,dx,\qquad \mathcal{E}(u)=\int_\Omega \left(u^2+\frac{2}{3}bv^2\right)\,dx,\qquad \mathcal{H}(u)=\int_\Omega \left((1+a)(u^3-\frac{1}{2}u_x^2)+b(uv^2-v_x^2)\right)\,dx.
\end{align*}
These invariants  play a role analogous to those of the scalar gKdV equation and provide the basis for structure-preserving discretizations.

A variety of numerical methods has been proposed for HS--KdV-type systems, including spectral or pseudospectral collocation methods \cite{DarvishiEtAl2007}, radial basis function methods \cite{IslamEtAl2009,SadeeqEtAl2022}, decomposition and differential-transform methods \cite{Raslan2004,ZuoZhang2011}, spline-based collocation methods \cite{RaslanEtAl2016}, and finite element methods \cite{YagmurluEtAl2019}. DG methods have been developed for Ito-type KdV  systems \cite{XuShu2006,BaharloueiEtAl2022}, although these approaches typically do not enforce full invariant preservation.

Despite these developments, DG methods that simultaneously preserve all three invariants for the HS--KdV system appear to be absent in the current literature. 
Motivated to fill this gap and extend our high-order conservative DG scheme from scalar to systems, we develop a conservative DG method for the HS--KdV system within the implicit-penalization framework of~\cite{ChenDongPereira2022}. By treating penalization parameters as additional unknowns and enforcing conservation constraints, we construct a DG scheme that simultaneously preserves the discrete mass, energy, and Hamiltonian of the HS--KdV system. This yields a structure-preserving approximation that is well suited for long-time simulation of nonlinear wave interactions.

Overall, this paper presents a unified structure-preserving DG framework for both scalar and coupled KdV-type equations, based on numerical traces with penalty parameters that are implicitly determined through conservation constraints in the DG formulation. This framework enables consistent enforcement of multiple invariants across different dispersive models within a single discretization philosophy.

The remainder of the paper is organized as follows. In Section \ref{sec:kdv}, we discuss the conservative DG method for the gKdV equation, beginning with a review of the fully conservative DG framework from \cite{ChenDongPereira2022}. We then define a revised set of numerical traces and derive the improved DG formulation. Section \ref{sec:hs-kdv} is devoted to the derivation of the conservative DG formulation for the HS--KdV system and the proof of the conservation properties. In Section \ref{sec:numerics}, we present numerical results that confirm the accuracy, convergence rates, and long-time conservative behavior of the proposed methods for the gKdV equation and the HS-KdV system. Finally, Section \ref{sec:conclusion} provides concluding remarks and discusses future work. Implementation details, including the relevant flowchart, are provided in the Appendix.

\section{Conservative DG Method for the gKdV Equation} \label{sec:kdv}

In this section, we consider the gKdV equation. We begin by establishing notation and recalling the baseline DG formulation, followed by a review of the previous numerical trace design and the resulting nonlinear discrete system. The limitations of the earlier approach, particularly in terms of temporal discretization and computational complexity, are discussed.

We then introduce a new choice of numerical traces and derive the corresponding discrete system. The revised formulation eliminates the restrictive term present in the previous scheme, thereby allowing the implementation of higher-order implicit Runge--Kutta methods, specifically IRK4. Moreover, the new system significantly reduces the number of nonlinear solves required per time step, improving overall computational efficiency.

\subsection{Notation}
We partition the domain $\Omega=(a,b)$ into 
$
\mathcal{T}_h=\{I_i:=(x_{i-1},x_i): {i=1, \cdots, N}\},
$
with $x_0=a, x_N=b, h_i=x_i-x_{i-1}$ and $h=\max_{1\le i\le N} h_i$. Let
$
\partial \mathcal{T}_h:=\{\partial I_i: i=1,\ldots,N\}
$ and
$
E_h:=\{x_i\}_{i=0}^N
$
denote the set of all element boundaries and the set of all mesh nodes, 
respectively. 
For any function $\zeta\in L^2(\partial\mathcal{T}_h)$, 
we denote its values on
$
\partial I_i=\{x_{i-1}^+,x_i^-\}
$
by 
$\zeta_{i-1}^+$ 
and $\zeta_i^-$. In general, the two traces at a node need not coincide. 
In contrast, for any function $\eta\in L^2(E_h)$, its value at $x_i$ is 
uniquely defined, and we write
$
\eta_i:=\eta(x_i).
$
We use the following notation for the broken inner products:
\[
(\phi,v):=\sum_{i=1}^N \int_{I_i}\phi v\,dx,
\qquad
\langle \phi,vn\rangle:=\sum_{i=1}^N \left(\phi(x_i^-)v(x_i^-)n(x_i^-)
+
\phi(x_{i-1}^+)v(x_{i-1}^+)n(x_{i-1}^+)\right),
\]
where
 $n$ is the outward unit normal vector ($n(x_{i-1}^+)=-1, n(x_i^-)=1$).
For any piecewise function $\phi$, its average and jump at $x_i$ are 
defined by
\[
\{\phi\}(x_i):=\frac{1}{2}\bigl(\phi(x_i^-)+\phi(x_i^+)\bigr),
\qquad
\jmp{\phi}(x_i):=\phi(x_i^-)-\phi(x_i^+).
\]
We also introduce the {periodic} finite element space 
\[
W_h^k
=
\left\{
\omega\in L^2(\mathcal{T}_h):
\omega|_K\in P^k(K)\ \text{for all }K\in\mathcal{T}_h,
\ \text{and }\omega(a)=\omega(b)
\right\},
\]
where $P^k(K)$ denotes the space of polynomials of degree at most $k$ 
on $K$, and the condition $\omega(a)=\omega(b)$ enforces periodicity.
Finally, the $H^s(D)$-norm is denoted by $\|\cdot\|_{s,D}$. We omit 
the first subscript when $s=0$, and omit the second one when $D=\Omega$ 
or $\mathcal{T}_h$.

\subsection{The Baseline DG Formulation}
To construct the DG formulation, we first rewrite Eq.~\eqref{eq:gkdv} as an equivalent system of first-order equations:
\begin{align*}
q-u_x = 0, \qquad
p-\epsilon q_x = f(u), \qquad 
u_t+p_x = g(x,t) 
\end{align*}
with the initial condition \(u(x,0) = u_0(x)\) and periodic boundary conditions.
The semi-discrete DG scheme seeks approximations 
$(u_h, q_h, p_h) \in \left[W_h^k\right]^3$ that satisfy
\begin{subequations}\label{DEqs}
\begin{align}
    (q_h, v) + (u_h, v_x) 
    - \langle \hat{u}_h, v n \rangle &= 0,  
    \label{2.4a} \\
    (p_h, z) + \epsilon (q_h, z_x) 
    - \epsilon \langle \hat{q}_h, z n \rangle 
    &= (f(u_h), z), 
    \label{2.4b} \\
    (u_{h t}, w) - (p_h, w_x) 
    + \langle \hat{p}_h, w n \rangle 
    &= (g, w),
\end{align}
\end{subequations}
for any $v, z, w \in W_h^k$. Here, $\hat{u}_h$, $\hat{q}_h$, and $\hat{p}_h$ denote the numerical traces, whose definitions are crucial for the accuracy and stability of the DG method \cite{arnold2002unified}. {Their choices also play} a central role in preserving physically relevant properties of the scheme. In particular, conservation properties can be enforced through appropriate choices of numerical traces combined with additional constraints. 
We begin with a Lemma from \cite{ChenDongPereira2022}, which gives sufficient conditions for DG methods   \eqref{DEqs} to conserve  mass, $L^2$ energy, and Hamiltonian.
\begin{lemma}\label{lemma:conserv_general}
	Suppose $(u_h, q_h, p_h)$ satisfy \eqref{DEqs} with $g=0$. \\
	(i) If $\widehat{p_h}$ is single-valued, then we have
	\[
	\frac{d}{dt}\int_{\calT_h} u_h \,dx=0
	\quad \, {\rm (mass~conservation)}.
	\]
	(ii)	If $\widehat{u_h}, \widehat{q_h}, \widehat{p_h}$ are single-valued and satisfy the condition
	\begin{equation}
		\label{eq:conserv_cond1}
	\sum_{i=1}^N  \Big( (\widehat{u_h} - \{u_h\})(\jmp{\Pi f(u_h)} - \jmp{p_h})+\varepsilon(\widehat{q_h} - \{q_h\})\jmp{q_h}-(\widehat{p_h} - \{p_h\})\jmp{u_h}  +\jmp{V(u_h)} - \{\Pi f(u_h)\} \jmp{u_h} \Big)(x_i)=0,
	\end{equation}
	where $\Pi$ is the $L^2$ orthogonal projection onto $W^k_h$, then we have 
	\[ 
		\frac{d}{dt}\int_{\calT_h} u_h^2 \,dx=0
		 \quad \, {\rm (energy~ conservation)}.
	\]
	(iii)	If $\widehat{u_h}, \widehat{q_h}, \widehat{p_h}$ are single-valued and satisfy the condition
	\begin{equation}
		\label{eq:conserv_cond2}
		 \sum_{i=1}^{N} \left( \varepsilon(\widehat{u_h} - \{u_h\})_t \jmp{q_h}  +\varepsilon(\widehat{q_h} - \{q_h\}) \jmp{u_h}_t  + (\widehat{p_h} - \{p_h\})\jmp{p_h}\right)(x_i)=0,
	\end{equation}
	then we have
	\begin{equation*}
		\frac{d}{dt}\int_{\calT_h} \Big(\,\frac{\varepsilon}{2}
		q_h^2 - V(u_h)\Big) \,dx=0
		 \quad\, ({\rm Hamiltonian~conservation}).
	\end{equation*}
\end{lemma}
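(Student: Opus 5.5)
The plan is to choose test functions in \eqref{DEqs} so that the volume terms cancel, and then to reduce every boundary contribution to a sum over nodes. The one algebraic tool is the identity, valid for periodic piecewise functions and single-valued $\hat\zeta$,
\[
\langle \hat\zeta, \phi n\rangle=\sum_{i=1}^N \hat\zeta\jmp{\phi}(x_i),\qquad \sum_{i=1}^N\jmp{\phi\psi}(x_i)=\sum_{i=1}^N\bigl(\{\phi\}\jmp{\psi}+\jmp{\phi}\{\psi\}\bigr)(x_i),
\]
together with $(\phi_x,\phi)=\tfrac12\sum_i\jmp{\phi^2}$ and, more generally, $(\phi,\psi_x)+(\phi_x,\psi)=\sum_i\jmp{\phi\psi}$.

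For (i), I will take $w=1$ in the third equation. Then $(p_h,w_x)=0$, and $\langle\hat p_h,n\rangle$ telescopes to zero because $\hat p_h$ is single-valued and the problem is periodic. This leaves $\frac{d}{dt}\int u_h=0$.

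For (ii), I will take $w=u_h$ in the third equation, $v=-p_h+\Pi f(u_h)$ in \eqref{2.4a}, and $z=q_h$ in \eqref{2.4b}, and add the three relations. The term $(f(u_h),q_h)$ is replaced using \eqref{2.4a} with $v=\Pi f(u_h)$, which gives $(q_h,\Pi f(u_h))=(q_h,f(u_h))$. It is then rewritten via $(u_{h,x},\Pi f)$ plus boundary terms. The key identity here is $(u_{h,x},f(u_h))=\sum_i\jmp{V(u_h)}$, which needs exact (not projected) $f$, and this is why $\Pi$ appears. The volume terms $(p_h,q_h)$, $(q_h,q_{h,x})$, $(u_h,p_{h,x})+(p_h,u_{h,x})$ and the analogous terms for $\Pi f$ then collapse to jump sums. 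Writing $\hat\zeta=(\hat\zeta-\{\zeta\})+\{\zeta\}$ and applying the product rule for jumps, the average parts cancel pairwise. The remainder is exactly the left-hand side of \eqref{eq:conserv_cond1}, up to a factor, so $\frac12\frac{d}{dt}\|u_h\|^2$ equals that sum, which vanishes by hypothesis.

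For (iii), I will compute $\frac{d}{dt}\int(\frac{\varepsilon}{2}q_h^2-V(u_h))=\varepsilon(q_{h,t},q_h)-(f(u_h),u_{h,t})$. The first term comes from differentiating \eqref{2.4a} in time and taking $v=\varepsilon q_h$, which involves $(\hat u_h)_t$. For the second, \eqref{2.4b} with $z=u_{h,t}$ gives $(f(u_h),u_{h,t})=(p_h,u_{h,t})+\varepsilon(q_h,u_{h,tx})-\varepsilon\langle\hat q_h,u_{h,t}n\rangle$. Finally $(p_h,u_{h,t})$ is obtained from the third equation with $w=p_h$, which gives $-(p_h,p_{h,x})+\ldots$, a pure jump sum. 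Combining these, the volume pieces $\varepsilon[(q_{h,t},q_h)+(u_{h,t},q_{h,x})]$ reduce to jump sums after integration by parts. The remaining boundary terms are regrouped into $(\hat u_h-\{u_h\})_t\jmp{q_h}$, $(\hat q_h-\{q_h\})\jmp{u_h}_t$ and $(\hat p_h-\{p_h\})\jmp{p_h}$, with the average parts cancelling via $\{a\}\jmp{b}+\jmp{a}\{b\}=\jmp{ab}$. Condition \eqref{eq:conserv_cond2} then gives zero.

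The main obstacle is the bookkeeping in (ii). One has to insert $\Pi$ correctly, using $(q_h,\Pi f)=(q_h,f)$ because $q_h\in W_h^k$, and keep exact $f$ inside $(u_{h,x},f(u_h))$ so that it produces $\jmp{V(u_h)}$. The cross terms must also combine into $\jmp{\Pi f(u_h)}-\jmp{p_h}$ with the stated signs. I would verify the signs carefully node by node.
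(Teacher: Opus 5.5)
Your proposal is correct: the test functions $w=u_h$, $v=-p_h+\Pi f(u_h)$, $z=q_h$ (using $(q_h,\Pi f(u_h))=(q_h,f(u_h))$ and $(u_{h,x},\Pi f(u_h))=(u_{h,x},f(u_h))=\sum_i\jmp{V(u_h)}$) reproduce exactly the left-hand side of \eqref{eq:conserv_cond1} as $\tfrac12\frac{d}{dt}\|u_h\|^2$, and the time-differentiated \eqref{2.4a} with $v=\varepsilon q_h$, together with $z=u_{h,t}$ and $w=p_h$, reproduces \eqref{eq:conserv_cond2}. The paper does not prove this lemma itself (it is quoted from \cite{ChenDongPereira2022}), but your strategy is the same one it uses for the HS--KdV analogue, Theorem~\ref{lemma:4.1}: there the analogous choices $\gamma=2u_h$, $\alpha=2ap_h-6a\Pi(u_h^2)$, $\beta=-2aq_h$ give energy, and differentiating \eqref{4.3a} in time gives the Hamiltonian.
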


\subsection{The Previous Conservative DG and its Limitations}

The numerical traces used in the previous method \cite{ChenDongPereira2022} are chosen in a form analogous to that employed in local discontinuous Galerkin methods \cite{arnold2002unified}, namely,
\begin{equation*}
    \hat{u}_h = \{u_h\}, \quad
    \hat{q}_h = \{q_h\} + \tau_{qu}\jmp{u_h}, \quad
    \hat{p}_h = \{p_h\} + \tau_{pu}\jmp{u_h}. 
\end{equation*}
Instead of prescribing the penalty parameters $(\tau_{qu},\tau_{pu})$ a priori as in standard LDG formulations, they are treated here as {\it additional unknowns} to be determined. This additional flexibility makes it possible to impose two extra constraints, thereby enabling the scheme to conserve mass,  $L^2$ energy, and Hamiltonian. 

Using these traces, the discrete energy and Hamiltonian conservation conditions in Lemma \ref{lemma:conserv_general} are given by:
\begin{subequations}
	\begin{align}
		&\tau_{pu}\sum_{i=1}^N \jmp{u_h}^2(x_i) - \varepsilon\tau_{qu}\sum_{i=1}^N \jmp{u_h} \jmp{q_h}(x_i)=\sum_{i=1}^N \Big(\jmp{V(u_h)} - \{\Pi f(u_h)\} \jmp{u_h}\Big)(x_i), 	\label{eq:E_conserv}\\  
		&\tau_{pu}\sum_{i=1}^N \jmp{p_h} \jmp{u_h}(x_i)+\varepsilon\tau_{qu}\sum_{i=1}^N\jmp{u_h}_t\jmp{u_h}(x_i)=0. \label{eq:H_conserv}
    \end{align}
\end{subequations}

The gKdV equation contains a third-order spatial derivative term, which typically imposes severe time-step restrictions when explicit schemes are used. To avoid restrictive time-step constraints, implicit time-integration methods are often employed. In particular, we consider the second-order Implicit Midpoint method and a fourth-order implicit Runge--Kutta method (IRK4) \cite{BonaChenKarakashianXing2013,ChenCockburnDong2016}. These methods are widely used for dispersive and Hamiltonian-type problems due to their favorable long-time stability properties. 

In the previous work \cite{ChenDongPereira2022}, the Implicit Midpoint method was adopted for time discretization. This choice is closely tied to the structure of the Hamiltonian relation \eqref{eq:H_conserv}, which contains a time-derivative-of-jump term $\jmp{u_h}_t$. A consistent discretization of this term requires at least second-order temporal accuracy at all time levels, including the initial step.

To illustrate this issue, we briefly recall the Implicit Midpoint method used in \cite{ChenDongPereira2022}. Let \( 0 = t_0 < t_1 < \dots < t_M = T \) denote a uniform partition of the time interval \( [0,T] \) with time step \( \Delta t = t_{n+1} - t_n \). For each time level \( n = 0, \dots, M-1 \), the solution \( u_h^{n+1} \in W_h^k \) is computed via
\begin{equation}\label{2.11}
    u_h^{n+1} = 2u_h^{n+\frac{1}{2}} - u_h^n,
\end{equation}
where the intermediate state \( u_h^{n+\frac{1}{2}} \in W_h^k \) is the DG solution to the equation
\begin{equation}\label{2.12}
    \frac{u-u_h^n}{\frac{1}{2}\Delta t} + \epsilon u_{xxx} + f(u)_x = g(x, t_{n+\frac{1}{2}}).
\end{equation}
At each time step, one first solves a coupled nonlinear system for the DG variables and the penalty parameters at the intermediate stage \( t_{n+\frac{1}{2}} \). The solution $u_h$ then is updated to the next time level $t_{n+1}$. Subsequently, $q_h^{n+1}$ is obtained from the linear relation \eqref{2.4a}, while  $p_h^{n+1}$ and penalty parameters ($\tau_{qu}^{n+1}, \tau_{pu}^{n+1}$) require solving an additional nonlinear system consisting of \eqref{2.4b}, \eqref{eq:E_conserv} and \eqref{eq:H_conserv}.

It is important to note that only the IRK2 (Implicit  Midpoint) scheme was implemented in the previous work \cite{ChenDongPereira2022}. This limitation stems directly from the presence of the term $\jmp{u_h}_t$ in the Hamiltonian conservation equation \eqref{eq:H_conserv}, which introduces a temporal coupling that is not naturally compatible with standard multi-stage IRK methods.  In particular, for methods such as IRK4, a higher-order discrete approximation of $\jmp{u_h}_t$ would be required at the initial time step. Constructing such a self-starting high-order approximation is nontrivial, thereby preventing a straightforward extension to higher-order IRK schemes.

\subsection{{A New High-order} Conservative DG Method}

To overcome the limitation of the previous method in \cite{ChenDongPereira2022}, we revisit the sufficient conditions for energy and Hamiltonian conservation stated in Lemma~\ref{lemma:conserv_general}. In particular, Eq.~\eqref{eq:conserv_cond2} shows that the presence of time-derivative-of-jump terms in the conservation constraints is linked to the choice of numerical traces. 
This observation suggests that eliminating such terms requires restricting the form of the numerical traces. In particular, the traces $\widehat{u}_h$ and $\widehat{q}_h$ should not contain penalty contributions.

Motivated by this observation, we redesign the numerical traces as follows:
\begin{equation}\label{NTU}
		\hat{u}_h = \{ u_h \}, \qquad
		\hat{q}_h = \{ q_h \}, \qquad
		\hat{p}_h = \{ p_h \}  + \tau_{pu} \jmp{u_h} + \tau_{pq} \jmp{q_h}. 
\end{equation}
Applying the numerical traces above to Eq. \eqref{eq:conserv_cond1} and \eqref{eq:conserv_cond2},  we obtain the modified energy and Hamiltonian equations:
\begin{subequations}\label{EH}
    \begin{align}
        & \tau_{pu} \sum_{i=1}^{N} \jmp{u_h}^2(x_i)+\tau_{pq} \sum_{i=1}^{N} \jmp{u_h}\jmp{q_h}(x_i)=\sum_{i=1}^{N} \Big( \jmp{V(u_h)} - \{\Pi f(u_h)\}\jmp{u_h} \Big)(x_i), \label{eq:new_E_conserv}\\
     &\tau_{pu}  \sum_{i=1}^N \jmp{p_h}\jmp{u_h}(x_i)+\tau_{pq}  \sum_{i=1}^N \jmp{p_h}\jmp{q_h}(x_i)  = 0.\label{eq:new_H_conserv}
    \end{align}
\end{subequations}
The resulting DG formulation seeks $(u_h, q_h, p_h, \tau_{pu}, \tau_{pq})\in \left[W^k_h\right]^3 \times \mathbb{R}\times \mathbb{R}$ satisfying \eqref{DEqs} and \eqref{EH}, with numerical traces defined in \eqref{NTU}.

Importantly, the new conservation constraints in \eqref{eq:new_E_conserv}-\eqref{eq:new_H_conserv} no longer involve time-derivative-of-jump terms. This removal eliminates the main obstruction in the previous formulation and allows the use of higher-order time-stepping methods while maintaining conservation of mass, energy, and Hamiltonian. 
{This paper adopts} the following fourth-order implicit Runge--Kutta (IRK4) scheme:
\begin{equation}\label{eq:IRK4}
	u_h^{n+1} = u_h^n + \sqrt{3}\,\bigl(u_h^{n,2} - u_h^{n,1}\bigr),
\end{equation}
where the stage values \(u_h^{n,1}\) and \(u_h^{n,2}\) are obtained as the DG
solution of the coupled system
	\begin{align*}
		\frac{u^{n,1} - u^n}{\Delta t_n}
		&+ a_{11}\!\left(\epsilon u_{xxx}^{n,1} + \partial_x f(u^{n,1})\right)
		+ a_{12}\!\left(\epsilon u_{xxx}^{n,2} + \partial_x f(u^{n,2})\right) = g(x,t_n+c_1 \Delta t),\\
		\frac{u^{n,2} - u^n}{\Delta t_n}
		&+ a_{21}\!\left(\epsilon u_{xxx}^{n,1} + \partial_x f(u^{n,1})\right)
		+ a_{22}\!\left(\epsilon u_{xxx}^{n,2} + \partial_x f(u^{n,2})\right) = g(x,t_n+c_2 \Delta t),
	\end{align*}
with coefficients
$a_{11}=a_{22}=\frac14$, $a_{12}=\frac14-\frac{\sqrt{3}}{6}$,  $a_{21}=\frac14+\frac{\sqrt{3}}{6}$, $c_1=\frac12-\frac{\sqrt{3}}{6}$, and $c_2=\frac12+\frac{\sqrt{3}}{6}$.
This higher-order time discretization is naturally compatible with the conservative DG formulation due to the absence of temporal coupling in the conservation constraints. Thanks to the new formulation, once the intermediate stage solutions are obtained from the global nonlinear system, the updates for $u_h^{n+1}, q_h^{n+1}, p_h^{n+1}$, and the penalty parameters can be performed sequentially, with only linear solves involved. In contrast, the method in \cite{ChenDongPereira2022} required an additional nonlinear solve to get the update $p_h^{n+1}$ and the penalty parameters. Implementation details, including the matrix equations and a flowchart of the algorithm, are provided in the Appendix \ref{sec:implementation_kdv}.

\section{Conservative DG method for the HS-KdV System}\label{sec:hs-kdv}
In this section, we extend the conservative DG framework developed for the gKdV equation to the coupled HS–KdV system \eqref{eq:hskdv}. The coupling of the nonlinear equations and the three conservation constraints require a careful design of numerical fluxes in the DG formulation. Nevertheless, the overall construction follows the same principle: conservation properties are incorporated into the DG framework and penalty parameters are treated as new unknowns in appropriately chosen numerical traces. In what follows, we first present the definition of the conservative DG method for the HS–KdV system. Next, we derive sufficient conditions on the numerical traces under which the resulting DG methods are guaranteed to preserve the three invariants. Our choice of numerical traces satisfies these conditions and therefore ensures the desired conservation properties.

\subsection{The DG Method For HS-KdV}

To construct the DG method, we rewrite the coupled HS-KdV system \eqref{eq:hskdv} as a system of first–order equations: 
\begin{equation}\label{CKdVWeak}
	\begin{alignedat}{3}
	q - u_x &= 0, &\qquad p - q_x - 3u^2 &= 0, &\qquad u_t - a p_x - b (v^2)_x &= 0,\\  
	w - v_x &= 0, &\qquad r - w_x        &= 0, &\qquad  v_t + r_x + 3u w       & = 0,
\end{alignedat}
\end{equation}
with the initial conditions \(u(x,0)=u_0(x)\) and \(v(x,0)=v_0(x)\) as well as the periodic boundary conditions. 

We discretize \eqref{CKdVWeak} by seeking \((u_h,q_h,p_h,w_h,r_h,v_h)\in \left[W_h^k\right]^6\) that satisfies the following equations:
\begin{subequations}\label{DCKdV}
\begin{align}
    (q_h,\alpha) + (u_h,\alpha_x)
    - \langle \hat{u}_h, \alpha n \rangle &= 0, \label{4.3a}\\
    (p_h,\beta) + (q_h,\beta_x)
    - \langle \hat{q}_h, \beta n \rangle - (3u_h^2,\beta) &= 0, \label{4.3b}\\
    (u_{ht},\gamma) + (a p_h + b v_h^2,\gamma_x)
    - \langle a\hat{p}_h + b\widetilde{v_h^2}, \gamma n \rangle &= 0, \label{4.3c}\\
    (w_h,\xi) + (v_h,\xi_x)
    - \langle \hat{v}_h, \xi n \rangle &= 0, \label{4.3d}\\
    (r_h,\psi) + (w_h,\psi_x)
    - \langle \hat{w}_h, \psi n \rangle &= 0, \label{4.3e}\\
    (v_{ht},\phi) - (r_h,\phi_x)
    + \langle \hat{r}_h, \phi n \rangle + (3u_h w_h,\phi) &= 0, \label{4.3f}
\end{align}
\end{subequations}
for any $\alpha, \beta, \gamma, \xi, \psi, \phi \in W_h^k$, 
where
\begin{equation}
\begin{aligned}
\hat{u}_h &= \{u_h\}, & \hat{q}_h &= \{q_h\}, & \hat{p}_h &= \{p_h\} + \tau_{pu}\jmp{u_h} + \tau_{pv}\jmp{v_h},\\
 \hat{v}_h &= \{v_h\},  & \hat{w}_h &= \{w_h\}, & \hat{r}_h &= \{r_h\}, \qquad \widetilde{v_h^2} = \{\Pi v_h^2\}.
\end{aligned}
\label{4.4}
\end{equation}
Here, $\Pi$ is the $L^2$-orthogonal projection onto $W^k_h$, and the penalty parameters \(\tau_{pu}\) and \(\tau_{pv}\) are constants and are treated as additional new unknowns. This allows us to include the following two constraints in the DG formulation to enforce the conservation of energy and Hamiltonian:
\begin{subequations}\label{4.5}
\begin{align}
        \sum_{i=1}^N \Big[& \,a\tau_{pu}\jmp{u_h}\jmp{u_h} + a\tau_{pv}\jmp{u_h}\jmp{v_h}  -a \jmp{V(u_h)} + a\jmp{u_h}\{\Pi f(u_h)\} - b\Theta(u_h, v_h, v_h) \Big](x_i) = 0
        \\
        \begin{split}
        \sum_{i=1}^N \Big[ &\,\left(a\jmp{p_h} + b\jmp{\Pi v_h^2}\right)\left(a\tau_{pu}\jmp{u_h} + a\tau_{pv}\jmp{v_h}\right)
        + a\jmp{p_h}\left(\tau_{pu}\jmp{u_h} + \tau_{pv}\jmp{v_h}\right) \\
        &\quad+ 2b\Theta(q_h,w_h,v_h) - 2b\Theta(r_h,u_h,v_h) - 2b\Theta(u_h,w_h,w_h) - \Theta(p_h,v_h,v_h) \Big](x_i) \\
        &+ 6b\left( (u_h^2, \Pi (w_hv_h))-(u_h w_h, \Pi(u_h v_h))\right)=0,
        \end{split}
 \end{align}
 \end{subequations}
where  \(f(u)=3u^2\), \(V(\cdot)\) is an antiderivative of \(f(\cdot)\), and
\begin{equation}\label{eq:theta}
    \Theta(\phi_1,\phi_2,\phi_3) = \jmp{\phi_1}\{\phi_2\phi_3\}
    + \jmp{\phi_2\phi_3}\{\phi_1\}
    -\Big(\jmp{\phi_1}\{\Pi(\phi_2\phi_3)\}+\jmp{\phi_2}\{\Pi(\phi_1\phi_3)\}+\jmp{\phi_3}\{\Pi(\phi_1\phi_2)\}\Big).
\end{equation} 

Our conservative DG method is to find \((u_h,q_h,p_h,w_h,r_h,v_h,\tau_{pu}, \tau_{qu}) \in \left[W_h^k\right]^6\times\mathbb{R}\times \mathbb{R}\) that satisfy \eqref{DCKdV}--\eqref{4.5}.

\subsection{Conservation Properties} 

In this subsection, we investigate the conservation properties of the scheme introduced in the previous subsection. We begin by establishing, in the following theorem, general conditions on the numerical traces 
 under which a DG method satisfying \eqref{DCKdV} preserves the mass, \(L^2\)-energy, and Hamiltonian. The condition for energy conservation was previously derived in \cite{Pereira2023}; for completeness, we include the proof here. Building on this, we derive an additional condition that ensures Hamiltonian conservation, leading to a framework for simultaneous preservation of all three invariants. These conditions are then used to prove the conservation properties of the DG scheme defined by \eqref{DCKdV}--\eqref{4.5}.

\begin{theorem}\label{lemma:4.1}
Suppose that \((u_h,q_h,p_h,w_h,r_h,v_h)\) satisfy \eqref{DCKdV}.
\begin{enumerate}
\item[(i)] If \(\widehat{p}_h\) and \(\widetilde{v_h^2}\) are single-valued, then we have
\[
\frac{d}{dt}\int_{\mathcal{T}_h} u_h dx = 0 \qquad \mathrm{(mass~conservation).}
\]
\end{enumerate}
\item[(ii)] If all numerical traces in \eqref{DCKdV} are single-valued and satisfy the condition
\begin{equation}
\begin{aligned}
0 
 = \sum_{i=1}^N \Bigg[&
2a \Big( \jmp{u_h}(\hat{p}_h - \{p_h\}) + \jmp{p_h}(\hat{u}_h - \{u_h\}) \Big) - \frac{4}{3}b \Big( \jmp{v_h}(\hat{r}_h - \{r_h\}) + \jmp{r_h}(\hat{v}_h - \{v_h\}) \Big)\\
&- 2a  \jmp{q_h}(\hat{q}_h - \{q_h\}) 
+ \frac{4}{3}b \jmp{w_h}(\hat{w}_h - \{w_h\}) + 2b \Big( -\jmp{v_h^2}\{u_h\} + \jmp{u_h}(\widetilde{v_h^2} - \{v_h^2\}) \Big)\\
& - 2a\jmp{V(u_h)} 
+2a \Big( \jmp{u_h}\{\Pi f(u_h)\} - \jmp{\Pi f(u_h)}(\hat{u}_h - \{u_h\}) \Big)\\
& + 4b \Big( \jmp{v_h}\{\Pi u_h v_h\} - \jmp{\Pi u_h v_h}(\hat{v}_h - \{v_h\}) \Big) 
\Bigg](x_i).
\end{aligned}
\label{4.6}
\end{equation}
where $f(u)=3u^2$ and $V(u)=u^3$,
then we have
\begin{equation}\label{4.7}
   \frac{d}{dt}\int_{\mathcal{T}_h} \left(u_h^2 + \frac{2}{3} b v_h^2\right) dx = 0 \qquad \mathrm{(energy~ conservation).}
\end{equation}

\item[(iii)] If all numerical traces in \eqref{DCKdV} are single-valued and satisfy the condition
\begin{equation}
\begin{aligned}
0 = \sum_{i=1}^N \Bigg[ &(1+a)\left( - \jmp{q_h}\left(\hat{u}_{ht} - \{u_{ht}\}\right) - \jmp{u_{ht}}\left(\widehat{q}_h - \{q_h\}\right)
\right)+ 2b \left( -\jmp{v_{ht}}(\hat{w}_h - \{w_h\}) - \jmp{w_h}(\hat{v}_{ht} - \{v_{ht}\}) \right)\\
& + \left( a\jmp{p_h} + b\jmp{\Pi v_h^2} \right) \left(
a\left(\widehat{p}_h - \{p_h\}\right) + b\left(\widetilde{v_h^2} - \{\Pi v_h^2\}\right) \right)  
 + a\left( \jmp{p_h}(\hat{p}_h - \{p_h\}) \right) \\
& - 2b \left( -\jmp{v_h}\{\Pi v_hp_h\}+ \jmp{\Pi v_hp_h}(\hat{v}_h - \{v_h\}) \right)  + b\left( -\jmp{v_h^2}\{p_h\} + \jmp{p_h}(\widetilde{v_h^2} - \{v_h^2\}) \right)\\
 & -2b\left( \jmp{r_h}(\hat{r}_h - \{r_h\}) \right)   + 2b\tilde{\Theta}(q_h,w_h,v_h) - 2b\tilde{\Theta}(r_h,u_h,v_h) - 2b\tilde{\Theta}(u_h,w_h,w_h) \Bigg](x_i)+\mathcal{R}_{\Pi},
\end{aligned} \label{4.8}
\end{equation}
\vspace{-3ex}
where 
\begin{align*}
\mathcal{R}_{\Pi}=& 6b\Big( \left(u_h^2, \Pi (w_hv_h)\right)-\left(u_h w_h, \Pi(u_h v_h)\right)\Big),\\
\tilde{\Theta}(\phi_i,\phi_j,\phi_k)
=& \jmp{\phi_i}\{\phi_j\phi_k\}
+ \jmp{\phi_j\phi_k}\{\phi_i\}
- \sum_{\mathrm{cycl}}\Big[\jmp{\phi_i}\,\{\Pi (\phi_j \phi_k)\}-(\hat{\phi_i}-\{\phi_i\})\,\jmp{\Pi (\phi_j \phi_k)}\Big],
\end{align*}
where $
\sum_{\mathrm{cycl}} f(i,j,k)
= f(i,j,k) + f(j,k,i) + f(k,i,j)
$
denotes the sum over cyclic permutations of $(i,j,k)$,
then we have
\begin{equation}\label{4.9}
    \frac{d}{dt}\int_{\mathcal{T}_h}\left((1+a)(u_h^3 - \frac{1}{2}q_h^2) + b(u_hv_h^2 - w_h^2)\right) dx=0 \qquad \mathrm{(Hamiltonian~  conservation).}
\end{equation}
\end{theorem}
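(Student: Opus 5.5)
The proof uses one tool for all three parts. We test \eqref{DCKdV} with suitably chosen functions in $W_h^k$. We then convert every volume term into boundary terms using elementwise integration by parts together with the identity
\[
\langle \phi, \psi n\rangle = \sum_{i=1}^N \bigl(\jmp{\phi\psi}\bigr)(x_i) = \sum_{i=1}^N \bigl(\jmp{\phi}\{\psi\}+\{\phi\}\jmp{\psi}\bigr)(x_i),
\]
which holds for periodic piecewise functions. For a single-valued trace $\hat\phi$ we have $\langle \hat\phi,\psi n\rangle=\sum_i \hat\phi\jmp{\psi}$.

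\textbf{Mass.} Take $\gamma\equiv1$ in \eqref{4.3c}. Then $\gamma_x=0$, and the flux term is $\sum_i (a\hat p_h+b\widetilde{v_h^2})\jmp{1}=0$ because both traces are single-valued. Hence $\frac{d}{dt}\int u_h\,dx=0$.

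\textbf{Energy.} Take $\gamma=2u_h$ in \eqref{4.3c} and $\phi=\frac43 b v_h$ in \eqref{4.3f}. The flux terms reduce to $\langle\hat p_h,u_hn\rangle=\sum\hat p_h\jmp{u_h}$, and similarly for the others. To eliminate $(p_h,u_{hx})$, take $\alpha=p_h$ in \eqref{4.3a}, $\beta=q_h$ in \eqref{4.3b}, and integrate $(q_h,q_{hx})$ by parts. Do the same for $(r_h,v_{hx})$ using \eqref{4.3d} with $\xi=r_h$ and \eqref{4.3e} with $\psi=w_h$.

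The nonlinear terms need more care.
\begin{itemize}
\item $(3u_h^2,\beta)$ enters only through $(\Pi f(u_h),u_{hx})$. Since $\Pi f(u_h)\in W_h^k$, it can be used as the test function $\alpha$ in \eqref{4.3a}. Comparing with $\int (V(u_h))_x$ yields $-\jmp{V}+\jmp{u_h}\{\Pi f\}-\jmp{\Pi f}(\hat u_h-\{u_h\})$.
\item The coupling terms $(bv_h^2,u_{hx})$ and $(3u_hw_h,v_h)$ are handled analogously. Use $\Pi(u_hv_h)$ as the test function $\xi$ in \eqref{4.3d}, and combine $\int (u_hv_h^2)_x$ with the cancellation $(u_hw_h,v_h)=(w_h,\Pi(u_hv_h))$.
\end{itemize}
Collecting all boundary terms should reproduce exactly the bracket in \eqref{4.6}. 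So \eqref{4.6} is equivalent to $\frac{d}{dt}$ of the energy vanishing. I would check the coefficients $2a$, $\frac43 b$, $4b$ term by term.

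\textbf{Hamiltonian.} Differentiate $\int (1+a)(u_h^3-\tfrac12 q_h^2)+b(u_hv_h^2-w_h^2)$ in time. This requires the time-differentiated versions of \eqref{4.3a} and \eqref{4.3d}, tested against $q_h$ and $w_h$; they turn $(q_h,q_{ht})$ and $(w_h,w_{ht})$ into $(u_{ht},\cdot)$ and $(v_{ht},\cdot)$ plus trace terms. This step produces the $\hat u_{ht}$ and $\jmp{q_h}$ terms in \eqref{4.8}.

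The resulting expression is $(u_{ht},\,3(1+a)u_h^2 + \ldots + bv_h^2)$ and $(v_{ht},\ldots)$. Replace $u_{ht}$ and $v_{ht}$ using \eqref{4.3c} and \eqref{4.3f}, with test functions built from $\Pi$-projections: $\gamma=(1+a)\Pi(\ldots)$ essentially $\propto p_h$ plus $b\,\Pi v_h^2$, and $\phi\propto r_h$ plus $\Pi(u_hv_h)$. Then reuse \eqref{4.3a}--\eqref{4.3e} with test functions $p_h$, $\Pi v_h^2$, $\Pi(v_hp_h)$, $\Pi(u_hw_h)$, and so on, to move derivatives across.

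The quadratic products like $(ap_h+b\Pi v_h^2)$ telescope. They give the squared-jump combination $(a\jmp{p_h}+b\jmp{\Pi v_h^2})(a(\hat p_h-\{p_h\})+b(\widetilde{v_h^2}-\{\Pi v_h^2\}))$ through the identity $(\psi,\psi_x)=-\tfrac12\sum\jmp{\psi^2}$ combined with the flux term.

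\textbf{Main obstacle.} The cubic cross terms such as $(q_hw_h,v_h)$, $(r_hu_h,v_h)$ and $(u_hw_hw_h)$ are the hard part. For a product $\phi_1\phi_2\phi_3$, I would show that $\int(\phi_1\phi_2\phi_3)_x$, computed elementwise, equals the sum of the three DG identities in which each derivative $\phi_{i,x}$ is replaced by its discrete version tested with $\Pi(\phi_j\phi_k)$, up to boundary residues. The residues are precisely $\tilde\Theta(\phi_1,\phi_2,\phi_3)$, including the $(\hat\phi_i-\{\phi_i\})\jmp{\Pi(\phi_j\phi_k)}$ corrections coming from the traces. The leftover volume terms, where $\Pi$ does not commute with multiplication, cannot be converted to boundary terms; they give $\mathcal R_\Pi$.

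The bookkeeping in this step, checking that each volume term is either cancelled or absorbed into $\tilde\Theta$ or $\mathcal R_\Pi$, is where errors are most likely. I would organize it by writing one generic lemma for $\tilde\Theta$ and applying it three times.
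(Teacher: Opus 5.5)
Your proposal is correct and follows essentially the same route as the paper. It uses the same test functions: $\gamma=1$ for mass; $\gamma=2u_h$ and $\phi=\tfrac43 b\,v_h$ for energy, with $p_h,\Pi f(u_h)$ tested in \eqref{4.3a}, $q_h$ in \eqref{4.3b}, $r_h,\Pi(u_hv_h)$ in \eqref{4.3d} and $w_h$ in \eqref{4.3e}; and, for the Hamiltonian, the time-differentiated \eqref{4.3a}, \eqref{4.3d} tested with $q_h,w_h$ together with $\gamma\propto(1+a)p_h+b\Pi v_h^2$ and $\phi\propto r_h+\Pi(u_hv_h)$. Your generic cyclic lemma is exactly the paper's $\mathcal{T}(\phi_1,\phi_2,\phi_3)$ bookkeeping in Appendix~\ref{appendix:proof_lemma}, applied to $(q_h,w_h,v_h)$, $(r_h,u_h,v_h)$ and $(u_h,w_h,w_h)$, with all remaining volume terms cancelling except $\mathcal{R}_{\Pi}$.

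Two small corrections. With $\jmp{\phi}=\phi^--\phi^+$, and with your own identity for $\langle\phi,\psi n\rangle$, one has $(\psi,\psi_x)=+\tfrac12\sum_i\jmp{\psi^2}(x_i)$, not $-\tfrac12\sum_i\jmp{\psi^2}(x_i)$. Also, the step you gloss as ``$\gamma$ essentially $\propto p_h$'' is really testing \eqref{4.3b} with $(1+a)u_{ht}$ and \eqref{4.3e} with $-2b\,v_{ht}$; that is where the $\jmp{u_{ht}}(\widehat{q}_h-\{q_h\})$ and $\jmp{v_{ht}}(\widehat{w}_h-\{w_h\})$ terms of the Hamiltonian condition come from.
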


\begin{proof}
\renewcommand{\qedsymbol}{}
\begin{enumerate}
   \item[(i)] Mass conservation follows by taking \(\gamma = 1\) in \eqref{4.3c} and using the single-valuedness of \(\widehat{p}_h\) and \(\widetilde{v_h^2}\).
   \item[(ii)] Next, we establish the conservation of energy. Taking the test functions
   \[
   \alpha = 2a p_h-6a\Pi(u_h^2), \; \beta = -2a q_h, \; \gamma = 2u_h, \;\xi = -\frac{4}{3}b\,r_h-4b \Pi(u_h v_h), \; \psi = \frac{4}{3}b\,w_h,  \; \phi = \frac{4}{3}b\,v_h
   \]
   in \eqref{4.3a}-\eqref{4.3f}, respectively,  adding these equations together, and using integration by parts, we get
\begin{align*}
\frac{d}{dt}\int_{\mathcal{T}_h} \left( u_h^2 + \frac{2}{3}b\, v_h^2 \right) dx
&+ 2a\langle p_h,u_h n\rangle
- 2a\langle \hat{u}_h,p_h n\rangle
- 2a\langle \hat{p}_h,u_h n\rangle
- a\langle q_h,q_h n\rangle
+ 2a\langle \hat{q}_h,q_h n\rangle\\
&- \frac{4}{3}b\langle v_h,r_h n\rangle
+ \frac{4}{3}b\langle \hat{v}_h,r_h n\rangle
+ \frac{4}{3}b\langle \hat{r}_h,v_h n\rangle
+ \frac{2}{3}b\langle w_h,w_h n\rangle
- \frac{4}{3}b\langle \hat{w}_h,w_h\rangle\\
&-6a\langle u_h, (\Pi u_h^2)n \rangle +6a(u_{hx}, \Pi(u_h v_h))+6a\langle\widehat{u}_h, \Pi(u_h^2)n\rangle\\
&+ 2b\langle v_h^2,u_h n\rangle
- 2b\langle \widetilde{v_h^2},u_hn\rangle 
-4b\langle v_h, \Pi(u_hv_h) n\rangle +4b\langle\widehat{v}_h, \Pi(u_hv_h)n\rangle
= 0.
\end{align*}
Since all numerical traces are single-valued, the equation above can be rewritten as 
\[
\begin{aligned}
\frac{d}{dt}\int_{\mathcal{T}_h} \left( u_h^2 + \frac{2}{3}b\, v_h^2 \right) dx
= & -2a\langle \widehat{p}_h-p_h,(\widehat{u}_h-u_h) n\rangle
+ a\langle (\widehat{q}_h-q_h)^2, n\rangle
+ \frac{4}{3}b\langle \widehat{v}_h- v_h,(\widehat{r}_h-r_h)n\rangle\\
&- \frac{2}{3}b\langle (\widehat{w_h}-w_h)^2, n\rangle
-6a\langle\widehat{u}_h -u_h, \Pi(u_h^2)n\rangle
+ 2b\langle \widetilde{v_h^2}-v_h^2,u_h n\rangle  \\
&-4b\langle\widehat{v}_h -v_h, \Pi(u_hv_h)n\rangle
-6a(u_{hx}, (\Pi u_h^2)).
\end{aligned}
\]
Letting \(V(u)=u^3\) and using integration by parts, we can rewrite the last term on the right hand side as
\[
	6a\left(u_{hx}, \Pi (u_h^2)\right)=  6a(u_{hx}, u_h^2) =2a\langle u_h^3, n\rangle =2a \langle V(u_h), n\rangle.
\]
Finally, using the identity
\begin{equation}\label{4.17}
\langle \rho, v n \rangle
= \sum_{i=1}^{N} \left( \jmp{\rho}\{v\} + \jmp{v}\{\rho\} \right)(x_i) \quad \text{ for any }\rho, v \in W_h^k,
\end{equation}
we get that  the energy-conservation equation \eqref{4.7} holds if the condition \eqref{4.6} is satisfied. This completes the proof of the discrete energy conservation property.

\item[(iii)] To prove the Hamiltonian conservation properties in \eqref{4.9}, we first differentiate \eqref{4.3a} and \eqref{4.3d} with respect to $t$ and perform integration by parts, which yields
\begin{subequations}
\begin{align}
(q_{ht}, \alpha) - (u_{htx}, \alpha)
- \langle \widehat{u}_{ht} - u_{ht}, \alpha \rangle &= 0,
\label{4.19a}\\
(w_{ht}, \xi) - (v_{htx}, \xi)
- \langle \widehat{v}_{ht} - v_{ht}, \xi \rangle &= 0.
\label{4.19b}
\end{align}
\end{subequations}
Next, in \eqref{4.19a}, \eqref{4.19b}, \eqref{4.3b}, \eqref{4.3c},  \eqref{4.3e}, and \eqref{4.3f}, we take
$$\alpha=(1+a)q_h, \;  \xi=-2b w_h,\; \beta=(1+a)u_{ht}, \;\gamma=-(1+a)p_h - b\Pi v_h^2, \;\psi=-2b v_{ht}, \; \phi=2b\,(\Pi(u_h v_h)+r_h)$$
respectively,  sum the resulting identities,  and apply integration by parts to obtain
\begin{equation}\label{eq:with_S}
\begin{aligned}
&(1+a)(3u_h^2,u_{ht})
-(1+a)(q_{ht},q_h)
+ b(u_{ht}, v_h^2)
+ 2b(v_{ht}, u_h v_h)
-2b(w_{ht}, w_h)\\
= & (1+a)\langle \widehat{q}_h - q_h,\; (\widehat{u}_{ht}-u_{ht}) \, n\rangle  -\frac{1}{2}\langle \bigl(a (\hat{p}_h - p_h) + b(\widetilde{v_h^2} - \Pi v_h^2)\bigr)^2, n \rangle - \frac{a}{2}\langle (\widehat{p}_h - p_h)^2,\, n\rangle \\
&+ 2b\langle \widehat{v}_{ht}-v_{ht},\; (\widehat{w}_h-w_h)\, n\rangle 
+ b\langle (\widehat{r}_h-r_h)^2,\,n\rangle
+b\langle \widetilde{v^2}-v_h^2, p_h n\rangle + S,
\end{aligned}
\end{equation}
where
 \begin{align}\label{eq:S}
 S & =  2b(v_h v_{hx},p_h)
+ 2b\left(r_h,(\Pi (u_h v_h)_x\right) - 2b\langle \widehat{r}_h,\Pi (u_h v_h) n\rangle - 6b\left(u_h w_h,\Pi (u_h v_h) + r_h\right),
 \end{align}
 In deriving \eqref{eq:with_S}, we have used the assumption that all numerical traces are single-valued across element interfaces.
We then apply the identity \eqref{4.17} to rewrite the boundary integral terms in \eqref{eq:with_S}, obtaining
\begin{equation*}
\begin{aligned}
 &\frac{d}{dt}\int_{\mathcal{T}_h} \left((1+a)(u_h^3 - \frac{1}{2}q_h^2) + b(u_hv_h^2 - w_h^2)\right) dx \\
 = \sum_{i=1}^N \Bigg[ &(1+a)\left( - \jmp{q_h}\left(\hat{u}_{ht} - \{u_{ht}\}\right) - \jmp{u_{ht}}\left(\widehat{q}_h - \{q_h\}\right)
\right)+ 2b \left( -\jmp{v_{ht}}(\hat{w}_h - \{w_h\}) - \jmp{w_h}(\hat{v}_{ht} - \{v_{ht}\}) \right)\\
& + \left( a\jmp{p_h} + b\jmp{\Pi v_h^2} \right) \left(
a\left(\widehat{p}_h - \{p_h\}\right) + b\left(\widetilde{v_h^2} - \{\Pi v_h^2\}\right) \right)  
 + a\left( \jmp{p_h}(\hat{p}_h - \{p_h\}) \right)\\
& -2b\left( \jmp{r_h}(\hat{r}_h - \{r_h\}) \right) + b\left( -\jmp{v_h^2}\{p_h\} + \jmp{p_h}(\widetilde{v_h^2} - \{v_h^2\}) \right) \Bigg](x_i)+S.
\end{aligned} \label{4.8}
\end{equation*}
 To complete the proof of (iii) of Theorem \eqref{lemma:4.1}, we just need to use the following Lemma, whose proof is provided in Appendix \ref{appendix:proof_lemma}.
\begin{lemma}\label{lemma:nonlinear-coupling}
Let $(u_h,q_h,p_h,v_h,w_h,r_h)\in [W_h^k]^6$ satisfy the DG formulation
\eqref{4.3a}--\eqref{4.3f}. Then $S$, defined in \eqref{eq:S}, can be rewritten as
\begin{equation*} 
\begin{aligned}
S = \sum_{i=1}^N \Bigg[
& - 2b \left( -\jmp{v_h}\{\Pi v_hp_h\}+ \jmp{\Pi v_hp_h}(\hat{v}_h - \{v_h\}) \right)  \\
  & + 2b\tilde{\Theta}(q_h,w_h,v_h) - 2b\tilde{\Theta}(r_h,u_h,v_h) - 2b\tilde{\Theta}(u_h,w_h,w_h) \Bigg](x_i)+ \mathcal{R}_{\Pi},
\end{aligned}
\end{equation*}
where $\tilde{\Theta}(\cdot, \cdot, \cdot)$ and $\mathcal{R}_{\Pi}$ are defined as in part (iii) of Theorem \ref{lemma:4.1}.
\end{lemma}

\end{enumerate}
\end{proof}

\begin{remark}
	We emphasize that Theorem \ref{lemma:4.1} provides a general framework for achieving the simultaneous conservation of mass, energy, and Hamiltonian. In particular, any choice of numerical traces \(\hat{u}_h\), \(\hat{q}_h\), \(\hat{p}_h\), \(\hat{w}_h\), \(\hat{r}_h\), \(\hat{v}_h\), and \(\widetilde{v_h^2}\) satisfying the conditions \eqref{4.6} and \eqref{4.8} yields a fully conservative scheme. 
	
This flexibility highlights that conservation is determined by structural constraints rather than a specific choice of numerical traces, and it provides a foundation for designing alternative conservative DG formulations.
\end{remark}

\begin{remark}
	To avoid the appearance of time derivatives of average or jump terms in the Hamiltonian conservation constraint \eqref{4.8}, the numerical traces must satisfy the following restriction:
    \[ 
     (1+a)\left(\jmp{q_h}(\hat{u}_{ht} -\{u_{ht}\}) + \jmp{u_{ht}}(\widehat{q}_h - \{q_h\})\right)+2b\left(\jmp{v_{ht}}(\hat{w}_h - \{w_h\}) + \jmp{w_h}(\hat{v}_{ht} - \{v_{ht}\})\right)=0. 
    \]
    A simple choice is
	\begin{equation}\label{eq:trace_restriction}
		\widehat{u}_h=\{u_h\}, \quad \widehat{q}_h=\{q_h\}, \quad \widehat{v}_h=\{v_h\}, \quad \widehat{w}_h=\{ w_h\}.
	\end{equation}
	For the remaining three numerical traces \(\widehat{p}_h\), \(\widehat{r}_h\), and \(\widetilde{v_h^2}\), there remains some flexibility in their design. For example, one may take either (i) two of them to be central fluxes and the remaining one with a penalty term, or (ii) one to be a central flux and the other two with penalty terms. 
    In this work, we adopt a specific choice that satisfies \eqref{eq:trace_restriction} to eliminate the time derivative of average and jump terms, thereby suitable for the use of higher-order time discretizations.
\end{remark}

\begin{remark}
The term
$\mathcal{R}_{\Pi}= 6b\Big( \left(u_h^2, \Pi (w_h v_h)\right)-\left(u_h w_h, \Pi(u_h v_h)\right)\Big)$
arises in the condition \eqref{4.8} for Hamiltonian conservation from the use of the $L^2$ projection in the derivation. It can be viewed as a projection-induced, higher-order consistency remainder. This term vanishes when $\Pi$ is exact on products, i.e. $\Pi(w_h v_h)=w_h v_h$ and $\Pi (u_h v_h)=u_h v_h$; for the DG polynomial spaces $W_h^k$ and the $L^2$ projection $\Pi$ used here, this occurs only for $k=0$. 
\end{remark}

Applying Theorem \ref{lemma:4.1} to the scheme defined by \eqref{DCKdV}--\eqref{4.5}, we immediately get the following theorem.

\begin{theorem} 
For \((u_h,q_h,p_h,w_h,r_h,v_h)\) satisfying \eqref{DCKdV}, and with numerical traces defined by \eqref{4.4}--\eqref{4.5}, the mass, \(L^2\)-energy, and Hamiltonian conservation properties stated in Theorem~\textup{\ref{lemma:4.1}} hold.
\end{theorem}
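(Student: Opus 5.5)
The plan is to verify, one after another, the three hypotheses of Theorem~\ref{lemma:4.1} for the traces \eqref{4.4} together with the constraints \eqref{4.5}.

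\textbf{Mass and single-valuedness.} Every trace in \eqref{4.4} is built only from averages and jumps at the nodes $x_i$. The penalty parameters $\tau_{pu},\tau_{pv}$ are global scalars. Hence each trace is single-valued, and part (i) follows directly. The same observation supplies the single-valuedness hypothesis needed in parts (ii) and (iii).

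\textbf{Energy.} We substitute \eqref{4.4} into \eqref{4.6}. Since $\hat u_h-\{u_h\}=\hat q_h-\{q_h\}=\hat v_h-\{v_h\}=\hat w_h-\{w_h\}=\hat r_h-\{r_h\}=0$, most terms drop out, and we are left with
\[
\sum_i\Big[2a\jmp{u_h}(\tau_{pu}\jmp{u_h}+\tau_{pv}\jmp{v_h}) - 2b\jmp{v_h^2}\{u_h\}+2b\jmp{u_h}(\{\Pi v_h^2\}-\{v_h^2\}) - 2a\jmp{V(u_h)}+2a\jmp{u_h}\{\Pi f(u_h)\}+4b\jmp{v_h}\{\Pi(u_hv_h)\}\Big].
\]
Next we expand $-2b\Theta(u_h,v_h,v_h)$ using \eqref{eq:theta}. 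Its terms are $\jmp{u_h}\{v_h^2\}$, $\jmp{v_h^2}\{u_h\}$, and the three projection terms $\jmp{u_h}\{\Pi v_h^2\}$, $\jmp{v_h}\{\Pi(u_hv_h)\}$, $\jmp{v_h}\{\Pi(u_hv_h)\}$. Comparing, the reduced expression equals exactly twice the first constraint in \eqref{4.5}, so it vanishes. This step is a direct bookkeeping match, with signs to be checked term by term.

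\textbf{Hamiltonian.} We substitute into \eqref{4.8}. The time-derivative terms vanish by \eqref{eq:trace_restriction}, and so do the $\hat r_h$ terms. Because $\hat v_h=\{v_h\}$, the correction $(\hat\phi_i-\{\phi_i\})\jmp{\Pi(\phi_j\phi_k)}$ inside $\tilde\Theta$ disappears, so $\tilde\Theta$ reduces to $\Theta$. The bracket $a(\hat p_h-\{p_h\})+b(\widetilde{v_h^2}-\{\Pi v_h^2\})$ becomes $a\tau_{pu}\jmp{u_h}+a\tau_{pv}\jmp{v_h}$, which reproduces the first product in the second constraint of \eqref{4.5}. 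The term $a\jmp{p_h}(\hat p_h-\{p_h\})$ reproduces the second product.

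The remaining pieces to reconcile are the following:
\[
-2b\big(-\jmp{v_h}\{\Pi v_hp_h\}\big) \quad\text{and}\quad b\big(-\jmp{v_h^2}\{p_h\}+\jmp{p_h}(\{\Pi v_h^2\}-\{v_h^2\})\big).
\]
These should be compared with $-\Theta(p_h,v_h,v_h)$ in \eqref{4.5}. Expanding $\Theta(p_h,v_h,v_h)$ gives $\jmp{p_h}\{v_h^2\}+\jmp{v_h^2}\{p_h\}-\jmp{p_h}\{\Pi v_h^2\}-2\jmp{v_h}\{\Pi(p_hv_h)\}$. Multiplying by $-b$ matches the displayed terms exactly.

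\textbf{Main obstacle.} The printed constraint carries the coefficient $-1$ rather than $-b$ on $\Theta(p_h,v_h,v_h)$, so the hardest step is this coefficient reconciliation. I would recheck it against the derivation of \eqref{eq:with_S} and, if needed, read the constraint with the factor $b$, which is presumably a typographical slip. Once this is settled, $\mathcal R_\Pi$ matches the last term of \eqref{4.5} verbatim. Condition \eqref{4.8} is then precisely the second constraint, and part (iii) of Theorem~\ref{lemma:4.1} yields \eqref{4.9}.
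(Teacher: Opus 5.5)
Your proposal is correct and follows the paper's route: the paper simply invokes Theorem~\ref{lemma:4.1}, and your reduction of \eqref{4.6} to twice the first constraint in \eqref{4.5}, and of \eqref{4.8} to the second, is exactly the verification the paper leaves implicit. Your coefficient diagnosis is also right: the terms $b\langle \widetilde{v_h^2}-v_h^2, p_h n\rangle$ in \eqref{eq:with_S} and $-2b\langle \hat v_h - v_h, \Pi(v_h p_h) n\rangle$ from Lemma~\ref{lemma:nonlinear-coupling} both carry the factor $b$, so the constraint (and its matrix form \eqref{3.30h}) should read $-b\,\Theta(p_h,v_h,v_h)$, and as printed the argument gives Hamiltonian conservation only for $b=1$.
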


\section{Numerical Experiments}\label{sec:numerics}
We carry out numerical experiments to test the accuracy and conservation properties of the conservative DG methods for the gKdV equation and the HS-KdV system, and the results are presented in Subsections \ref{sec:numerics_gkdv} and \ref{sec:numerics_hs-kdv}, respectively. For all test problems, we compute the \(L^2\)-errors and the corresponding convergence rates using the IRK4 scheme, and we also verify the conservation of the discrete energy and Hamiltonian associated with the DG solutions. 

\subsection{Numerical Results for the gKdV Equation}\label{sec:numerics_gkdv}

In this subsection, we present three numerical experiments to examine the convergence behavior and conservation properties of the proposed DG method for the gKdV equation. In the first test problem, we consider a third-order linear equation with \(f(u)=u\). In the second test problem, we consider a third-order nonlinear equation with different parameters \(\varepsilon = 1,\;0.1,\;0.01\), where the solutions are periodic sine waves on the computational domain. In the third test problem, we solve the classical KdV equation with a cnoidal wave solution and compare the numerical approximation with the exact solution.

\subsubsection{Experiment 1: Linear Third-Order Equation}

In this experiment, we consider the third-order linear equation
$u_t + \epsilon u_{xxx} + (f(u))_x = 0$, 
where \(\epsilon = 1\) and \(f(u) = u\). Periodic boundary conditions are imposed on the domain
\(\Omega = [0,4\pi]\), and the initial condition is prescribed as \(u_0(x) = \sin\!\left(\tfrac{1}{2}x\right)\).
The corresponding exact solution is $u(x,t) = \sin\!\left(\tfrac{1}{2}x - \tfrac{3}{8}t\right)$.

We first study the convergence behavior of the DG method for this linear problem. Polynomial approximations of degree \(k = 0,1,2, 3, 4\) are employed. The mesh size is defined by
$
h = \frac{4\pi}{N}, \text{where } N = 2^{\,l}, \; l = 3,\ldots,7,
$
and the time step is chosen as \(\Delta t=0.2h\) for \(k=0,1,2\) and \(\Delta t=4h^2\) for \(k=3,4\).
The \(L^2\)-errors and corresponding convergence orders of the numerical solutions are reported in Table \ref{tab:2}. The results indicate that the numerical approximation to the primary variable \(u\) achieves optimal convergence rates for even polynomial degrees, while suboptimal rates are observed for odd degrees, which is typical for energy-conserving DG methods.
The approximation to the auxiliary variable \(q\) exhibits similar convergence behaviors to that of $u$. 
The approximation to \(p\) has comparatively weaker convergence, with optimal convergence order essentially restricted to $k=0$.

We remark that the use of a higher-order time integration scheme, such as IRK4, enables more effective use of higher-degree polynomial approximations and allows the spatial discretization to realize its full potential. It also permits larger time steps, making the method suitable for long-time simulations.

We next examine the conservation properties of the fully discrete scheme. Using IRK4 with polynomial degree \(k=2\) on \(32\) uniform elements, the solution is evolved to the final time \(T=50\). As shown in Fig. \ref{fig:2}, both the discrete energy and the discrete Hamiltonian remain constant throughout the simulation, with conservation errors that are extremely small.  

\begin{table}[H]
	\centering
	\caption{Numerical Experiment 1 (third-order linear equation): \(L^2\)-errors and convergence orders. }
	\label{tab:2}
	\renewcommand{\arraystretch}{1}
	\begin{tabular}{cc cc cc cc}
		\toprule
		$k$ & $N$
		& \multicolumn{2}{c}{$u_h$}
		& \multicolumn{2}{c}{$q_h$}
		& \multicolumn{2}{c}{$p_h$} \\
		\cmidrule(lr){3-4}
		\cmidrule(lr){5-6}
		\cmidrule(lr){7-8}
		&  & $L_2$ Error & Order & $L_2$ Error & Order & $L_2$ Error & Order \\
		\midrule
		
		\multirow{5}{*}{0}
		& 8   & $5.65\mathrm{e}{-1}$ & --
		& $3.07\mathrm{e}{-1}$ & --
		& $4.39\mathrm{e}{-1}$ & -- \\
		& 16  & $2.95\mathrm{e}{-1}$ & 0.94
		& $1.50\mathrm{e}{-1}$ & 1.03
		& $2.23\mathrm{e}{-1}$ & 0.97 \\
		& 32  & $1.41\mathrm{e}{-1}$ & 1.06
		& $7.14\mathrm{e}{-2}$ & 1.08
		& $1.06\mathrm{e}{-1}$ & 1.07 \\
		& 64  & $7.17\mathrm{e}{-2}$ & 0.99
		& $3.59\mathrm{e}{-2}$ & 0.99
		& $5.38\mathrm{e}{-2}$ & 0.99 \\
		& 128 & $3.55\mathrm{e}{-2}$ & 1.01
		& $1.78\mathrm{e}{-2}$ & 1.01
		& $2.66\mathrm{e}{-2}$ & 1.01 \\
		\midrule
		
		\multirow{5}{*}{1}
		& 8   & $5.72\mathrm{e}{-2}$ & --
		& $2.55\mathrm{e}{-1}$ & --
		& $2.33\mathrm{e}{-1}$ & -- \\
		& 16  & $1.60\mathrm{e}{-2}$ & 1.83
		& $1.38\mathrm{e}{-1}$ & 0.89
		& $1.35\mathrm{e}{-1}$ & 0.79 \\
		& 32  & $6.30\mathrm{e}{-3}$ & 1.35
		& $7.04\mathrm{e}{-2}$ & 0.97
		& $6.99\mathrm{e}{-2}$ & 0.95 \\
		& 64  & $2.79\mathrm{e}{-3}$ & 1.17
		& $3.53\mathrm{e}{-2}$ & 0.99
		& $3.53\mathrm{e}{-2}$ & 0.99 \\
		& 128 & $1.31\mathrm{e}{-3}$ & 1.09
		& $1.77\mathrm{e}{-2}$ & 1.00
		& $1.77\mathrm{e}{-2}$ & 1.00 \\
		\midrule
		
		\multirow{4}{*}{2}
		& 8   & $4.75\mathrm{e}{-3}$ & --
		& $9.12\mathrm{e}{-3}$ & --
		& $3.66\mathrm{e}{-2}$ & -- \\
		& 16  & $4.89\mathrm{e}{-4}$ & 3.28
		& $9.56\mathrm{e}{-4}$ & 3.25
		& $9.46\mathrm{e}{-3}$ & 1.95 \\
		& 32  & $6.06\mathrm{e}{-5}$ & 3.01
		& $1.23\mathrm{e}{-4}$ & 2.96
		& $2.32\mathrm{e}{-3}$ & 2.02 \\
		& 64  & $7.47\mathrm{e}{-6}$ & 3.02
		& $5.23\mathrm{e}{-6}$ & 4.55
		& $1.37\mathrm{e}{-4}$ & 4.08 \\
		\midrule
		
		\multirow{3}{*}{3}
		& 8   & $5.91\mathrm{e}{-4}$ & --
		& $2.03\mathrm{e}{-3}$ & --
		& $6.59\mathrm{e}{-3}$ & -- \\
		& 16  & $1.25\mathrm{e}{-4}$ & 2.24
		& $2.34\mathrm{e}{-4}$ & 3.12
		& $1.37\mathrm{e}{-3}$ & 2.26 \\
		& 32  & $1.43\mathrm{e}{-5}$ & 3.12
		& $3.05\mathrm{e}{-5}$ & 2.94
		& $4.50\mathrm{e}{-4}$ & 1.61 \\
		\midrule
		
		\multirow{3}{*}{4}
		& 8   & $7.66\mathrm{e}{-6}$ & --
		& $3.64\mathrm{e}{-5}$ & --
		& $4.42\mathrm{e}{-4}$ & -- \\
		& 16  & $2.35\mathrm{e}{-7}$ & 5.03
		& $1.12\mathrm{e}{-6}$ & 5.02
		& $2.76\mathrm{e}{-5}$ & 4.00 \\
		& 32  & $7.31\mathrm{e}{-9}$ & 5.01
		& $3.50\mathrm{e}{-8}$ & 5.00
		& $1.72\mathrm{e}{-6}$ & 4.00 \\
		\bottomrule
	\end{tabular}
\end{table}

\begin{figure}[htbp]
	\centering
	\includegraphics[width=0.9\textwidth]{./Graphics/P1K2.jpeg}
    
    \vspace{3pt}
    
	\includegraphics[width=0.9\textwidth]{./Graphics/P1K2EHErrorPlot.jpeg}
	\caption{Numerical Experiment 1 (third-order linear equation): Energy with error (left) and Hamiltonian with error (right) conservation.}
	\label{fig:2}
\end{figure}

\subsubsection{Experiment 2: Nonlinear Third-Order Equation}

In this numerical test, we consider the third–order nonlinear equation 
$u_t + \varepsilon u_{xxx} + (f(u))_x = g$,
subject to periodic boundary conditions on \(\Omega=[0,1]\) and the initial condition
\(u_0(x)=\sin(2\pi x)\). Here \(f(u)=\tfrac{u^2}{2}\), and the source term \(g\) is chosen such that the exact solution is
$u(x,t)=\sin(2\pi x+t)$.
We first investigate the convergence behavior of the DG method for different parameters 
\(\varepsilon = 1,\;0.1,\;0.01\) using polynomial degrees \(k=0,1,2, 3, 4\).
The mesh size is \(h=1/N\), where \(N=2^l,\; l=3,\ldots,7\), and the final time is \(T=0.1\). The time step is chosen as \(\Delta t=0.2h\) for \(k=0,1,2\) and \(\Delta t=4h^2\) for \(k=3,4\). The \(L^2\)-errors and convergence orders obtained with IRK4 are presented in Tables \ref{tab:combined_eps1}-\ref{tab:combined_eps001}. 

From Tables~\ref{tab:combined_eps1}--\ref{tab:combined_eps001}, consistent convergence trends are observed across all values of $\varepsilon$. The primary variable $u_h$ exhibits robust convergence behavior, achieving optimal rates for even polynomial degrees, while suboptimal or irregular convergence is observed for odd degrees. The auxiliary variable $q_h$ shows a similar convergence structure to $u_h$. The variable $p_h$ has optimal convergence order only for $k=0$.

Finally, we examine the conservation properties of the proposed DG scheme. The energy and Hamiltonian of the numerical solutions  obtained by IRK4 with polynomial degree
$k=2$ on 32 uniform elements over the time interval $t \in [0,50]$ are constant in time for all tested values of $\varepsilon$, similar to those in Fig. \ref{fig:2}. The corresponding conservation errors for both quantities are presented in Fig. \ref{fig:5}. The results demonstrate that the method preserves the energy and Hamiltonian to a high degree of accuracy. Although these invariants are proved to be conserved for the KdV equation when the source term satisfies $g \equiv 0$, the manufactured solution used here includes a nonzero source term while still preserving the corresponding invariants, providing a suitable test for the numerical scheme.

\begin{table}[H]
	\centering
	\caption{Numerical Experiment 2 (third-order nonlinear equation): \(L^2\)-errors and convergence orders for \(\varepsilon=1\).}
	\label{tab:combined_eps1}
	\renewcommand{\arraystretch}{1}
	\begin{tabular}{cc cc cc cc}
		\toprule
		$k$ & $N$
		& \multicolumn{2}{c}{$u_h$}
		& \multicolumn{2}{c}{$q_h$}
		& \multicolumn{2}{c}{$p_h$} \\
		\cmidrule(lr){3-4}
		\cmidrule(lr){5-6}
		\cmidrule(lr){7-8}
		&  & $L_2$ Error & Order & $L_2$ Error & Order & $L_2$ Error & Order \\
		\midrule
		\multirow{5}{*}{0}
		& 8   & $2.50\mathrm{e}{-1}$ & --
		& $1.20\mathrm{e}{+0}$ & --
		& $6.51\mathrm{e}{+0}$ & -- \\
		& 16  & $1.00\mathrm{e}{-1}$ & 1.32
		& $5.69\mathrm{e}{-1}$ & 1.09
		& $3.32\mathrm{e}{+0}$ & 0.97 \\
		& 32  & $4.50\mathrm{e}{-2}$ & 1.16
		& $2.70\mathrm{e}{-1}$ & 1.07
		& $1.64\mathrm{e}{+0}$ & 1.02 \\
		& 64  & $2.04\mathrm{e}{-2}$ & 1.14
		& $1.27\mathrm{e}{-1}$ & 1.09
		& $7.97\mathrm{e}{-1}$ & 1.04 \\
		& 128 & $1.00\mathrm{e}{-2}$ & 1.02
		& $6.31\mathrm{e}{-2}$ & 1.01
		& $3.96\mathrm{e}{-1}$ & 1.01 \\
		\midrule
		
		\multirow{5}{*}{1}
		& 8   & $1.33\mathrm{e}{-1}$ & --
		& $8.26\mathrm{e}{-1}$ & --
		& $1.55\mathrm{e}{+1}$ & -- \\
		& 16  & $2.72\mathrm{e}{-2}$ & 2.29
		& $4.98\mathrm{e}{-1}$ & 0.73
		& $6.46\mathrm{e}{+0}$ & 1.27 \\
		& 32  & $1.30\mathrm{e}{-2}$ & 1.06
		& $2.18\mathrm{e}{-1}$ & 1.19
		& $3.54\mathrm{e}{+0}$ & 0.87 \\
		& 64  & $3.63\mathrm{e}{-4}$ & 5.17
		& $1.25\mathrm{e}{-1}$ & 0.80
		& $1.57\mathrm{e}{+0}$ & 1.17 \\
		& 128 & $8.84\mathrm{e}{-5}$ & 2.04
		& $6.29\mathrm{e}{-2}$ & 1.00
		& $7.90\mathrm{e}{-1}$ & 1.00 \\
		\midrule
		
		\multirow{4}{*}{2}
		& 8   & $1.07\mathrm{e}{-3}$ & --
		& $2.85\mathrm{e}{-2}$ & --
		& $1.82\mathrm{e}{+0}$ & -- \\
		& 16  & $1.34\mathrm{e}{-4}$ & 2.99
		& $3.47\mathrm{e}{-3}$ & 3.04
		& $4.26\mathrm{e}{-1}$ & 2.10 \\
		& 32  & $1.68\mathrm{e}{-5}$ & 3.00
		& $4.30\mathrm{e}{-4}$ & 3.01
		& $1.04\mathrm{e}{-1}$ & 2.04 \\
		& 64  & $2.10\mathrm{e}{-6}$ & 3.00
		& $5.37\mathrm{e}{-5}$ & 3.00
		& $2.58\mathrm{e}{-2}$ & 2.01 \\
		\midrule
		
		\multirow{2}{*}{3}
		& 8   & $5.45\mathrm{e}{-5}$ & --
		& $7.33\mathrm{e}{-3}$ & --
		& $3.95\mathrm{e}{-1}$ & -- \\
		& 16  & $3.38\mathrm{e}{-6}$ & 4.01
		& $9.88\mathrm{e}{-4}$ & 2.89
		& $7.97\mathrm{e}{-2}$ & 2.31 \\
		\midrule
		
		\multirow{3}{*}{4}
		& 8   & $4.11\mathrm{e}{-6}$ & --
		& $1.20\mathrm{e}{-4}$ & --
		& $1.79\mathrm{e}{-2}$ & -- \\
		& 16  & $6.95\mathrm{e}{-7}$ & 2.56
		& $2.78\mathrm{e}{-6}$ & 5.43
		& $3.04\mathrm{e}{-4}$ & 5.89 \\
		& 32  & $1.26\mathrm{e}{-8}$ & 5.78
		& $1.13\mathrm{e}{-7}$ & 4.62
		& $6.97\mathrm{e}{-5}$ & 2.12 \\
		\bottomrule
	\end{tabular}
\end{table}

\begin{table}[H]
	\centering
	\caption{Numerical Experiment 2 (third-order nonlinear equation): \(L^2\)-errors and convergence orders for \(\varepsilon=0.1\).}
	\label{tab:combined_eps01}
	\renewcommand{\arraystretch}{1}
	\begin{tabular}{cc cc cc cc}
		\toprule
		$k$ & $N$
		& \multicolumn{2}{c}{$u_h$}
		& \multicolumn{2}{c}{$q_h$}
		& \multicolumn{2}{c}{$p_h$} \\
		\cmidrule(lr){3-4}
		\cmidrule(lr){5-6}
		\cmidrule(lr){7-8}
		&  & $L_2$ Error & Order & $L_2$ Error & Order & $L_2$ Error & Order \\
		\midrule
		
		\multirow{5}{*}{0}
		& 8   & $2.27\mathrm{e}{-1}$ & --
		& $1.16\mathrm{e}{+0}$ & --
		& $6.93\mathrm{e}{-1}$ & -- \\
		& 16  & $1.03\mathrm{e}{-1}$ & 1.14
		& $5.86\mathrm{e}{-1}$ & 0.98
		& $3.48\mathrm{e}{-1}$ & 0.99 \\
		& 32  & $4.36\mathrm{e}{-2}$ & 1.24
		& $2.65\mathrm{e}{-1}$ & 1.15
		& $1.64\mathrm{e}{-1}$ & 1.09 \\
		& 64  & $2.05\mathrm{e}{-2}$ & 1.09
		& $1.28\mathrm{e}{-1}$ & 1.05
		& $8.03\mathrm{e}{-2}$ & 1.03 \\
		& 128 & $1.01\mathrm{e}{-2}$ & 1.02
		& $6.32\mathrm{e}{-2}$ & 1.01
		& $3.99\mathrm{e}{-2}$ & 1.01 \\
		\midrule
		
		\multirow{5}{*}{1}
		& 8   & $5.66\mathrm{e}{-2}$ & --
		& $1.03\mathrm{e}{+0}$ & --
		& $1.14\mathrm{e}{+0}$ & -- \\
		& 16  & $1.71\mathrm{e}{-2}$ & 1.73
		& $5.12\mathrm{e}{-1}$ & 1.01
		& $6.21\mathrm{e}{-1}$ & 0.88 \\
		& 32  & $4.47\mathrm{e}{-3}$ & 1.94
		& $2.53\mathrm{e}{-1}$ & 1.02
		& $3.15\mathrm{e}{-1}$ & 0.98 \\
		& 64  & $1.14\mathrm{e}{-3}$ & 1.97
		& $1.26\mathrm{e}{-1}$ & 1.00
		& $1.59\mathrm{e}{-1}$ & 0.99 \\
		\midrule
		
		\multirow{4}{*}{2}
		& 8   & $1.08\mathrm{e}{-3}$ & --
		& $2.86\mathrm{e}{-2}$ & --
		& $1.83\mathrm{e}{-1}$ & -- \\
		& 16  & $1.35\mathrm{e}{-4}$ & 3.00
		& $3.47\mathrm{e}{-3}$ & 3.04
		& $4.27\mathrm{e}{-2}$ & 2.10 \\
		& 32  & $1.72\mathrm{e}{-5}$ & 2.97
		& $4.33\mathrm{e}{-4}$ & 3.01
		& $1.03\mathrm{e}{-2}$ & 2.04 \\
		& 64  & $2.11\mathrm{e}{-6}$ & 3.03
		& $5.37\mathrm{e}{-5}$ & 3.01
		& $2.58\mathrm{e}{-3}$ & 2.00 \\
		\midrule
		
		\multirow{3}{*}{3}
		& 8   & $5.37\mathrm{e}{-5}$ & --
		& $7.34\mathrm{e}{-3}$ & --
		& $3.96\mathrm{e}{-2}$ & -- \\
		& 16  & $1.14\mathrm{e}{-5}$ & 2.24
		& $9.73\mathrm{e}{-4}$ & 2.92
		& $7.98\mathrm{e}{-3}$ & 2.31 \\
		& 32  & $1.25\mathrm{e}{-6}$ & 3.19
		& $1.17\mathrm{e}{-4}$ & 3.06
		& $1.70\mathrm{e}{-3}$ & 2.23 \\
		\midrule
		
		\multirow{3}{*}{4}
		& 8   & $2.15\mathrm{e}{-6}$ & --
		& $1.28\mathrm{e}{-4}$ & --
		& $1.95\mathrm{e}{-3}$ & -- \\
		& 16  & $6.58\mathrm{e}{-8}$ & 5.03
		& $3.98\mathrm{e}{-6}$ & 5.01
		& $1.23\mathrm{e}{-4}$ & 3.99 \\
		& 32  & $2.06\mathrm{e}{-9}$ & 5.00
		& $1.24\mathrm{e}{-7}$ & 5.00
		& $7.67\mathrm{e}{-6}$ & 4.00 \\
		\bottomrule
	\end{tabular}
\end{table}

\begin{table}[H]
	\centering
	\caption{Numerical Experiment 2 (third-order nonlinear equation): \(L^2\)-errors and convergence orders for \(\varepsilon=0.01\).}
	\label{tab:combined_eps001}
	\renewcommand{\arraystretch}{1}
	\begin{tabular}{cc cc cc cc}
		\toprule
		$k$ & $N$
		& \multicolumn{2}{c}{$u_h$}
		& \multicolumn{2}{c}{$q_h$}
		& \multicolumn{2}{c}{$p_h$} \\
		\cmidrule(lr){3-4}
		\cmidrule(lr){5-6}
		\cmidrule(lr){7-8}
		&  & $L_2$ Error & Order & $L_2$ Error & Order & $L_2$ Error & Order \\
		\midrule
		
		\multirow{5}{*}{0}
		& 8   & $1.72\mathrm{e}{-1}$ & --
		& $1.19\mathrm{e}{+0}$ & --
		& $1.28\mathrm{e}{-1}$ & -- \\
		& 16  & $8.22\mathrm{e}{-2}$ & 1.06
		& $5.54\mathrm{e}{-1}$ & 1.10
		& $5.82\mathrm{e}{-2}$ & 1.14 \\
		& 32  & $4.03\mathrm{e}{-2}$ & 1.03
		& $2.59\mathrm{e}{-1}$ & 1.09
		& $2.67\mathrm{e}{-2}$ & 1.13 \\
		& 64  & $2.01\mathrm{e}{-2}$ & 1.01
		& $1.27\mathrm{e}{-1}$ & 1.03
		& $1.29\mathrm{e}{-2}$ & 1.04 \\
		& 128 & $1.00\mathrm{e}{-2}$ & 1.00
		& $6.31\mathrm{e}{-2}$ & 1.01
		& $6.40\mathrm{e}{-3}$ & 1.01 \\
		\midrule
		
		\multirow{5}{*}{1}
		& 8   & $7.90\mathrm{e}{-2}$ & --
		& $5.08\mathrm{e}{-1}$ & --
		& $1.38\mathrm{e}{-1}$ & -- \\
		& 16  & $1.10\mathrm{e}{-2}$ & 2.85
		& $4.93\mathrm{e}{-1}$ & 0.05
		& $6.85\mathrm{e}{-2}$ & 1.01 \\
		& 32  & $1.32\mathrm{e}{-3}$ & 3.06
		& $2.51\mathrm{e}{-1}$ & 0.97
		& $3.17\mathrm{e}{-2}$ & 1.11 \\
		& 64  & $6.98\mathrm{e}{-4}$ & 0.92
		& $1.26\mathrm{e}{-1}$ & 0.99
		& $1.65\mathrm{e}{-2}$ & 0.94 \\
		& 128 & $3.19\mathrm{e}{-4}$ & 1.13
		& $6.35\mathrm{e}{-2}$ & 0.99
		& $8.41\mathrm{e}{-3}$ & 0.98 \\
		\midrule
		
		\multirow{4}{*}{2}
		& 8   & $1.41\mathrm{e}{-3}$ & --
		& $3.22\mathrm{e}{-2}$ & --
		& $1.81\mathrm{e}{-2}$ & -- \\
		& 16  & $1.35\mathrm{e}{-4}$ & 3.38
		& $3.48\mathrm{e}{-3}$ & 3.21
		& $4.28\mathrm{e}{-3}$ & 2.08 \\
		& 32  & $1.69\mathrm{e}{-5}$ & 3.00
		& $4.31\mathrm{e}{-4}$ & 3.01
		& $1.04\mathrm{e}{-3}$ & 2.04 \\
		& 64  & $2.11\mathrm{e}{-6}$ & 3.00
		& $5.44\mathrm{e}{-5}$ & 2.98
		& $2.62\mathrm{e}{-4}$ & 1.99 \\
		\midrule
		
		\multirow{3}{*}{3}
		& 8   & $1.08\mathrm{e}{-4}$ & --
		& $7.21\mathrm{e}{-3}$ & --
		& $3.97\mathrm{e}{-3}$ & -- \\
		& 16  & $5.12\mathrm{e}{-5}$ & 1.07
		& $1.03\mathrm{e}{-3}$ & 2.81
		& $7.63\mathrm{e}{-4}$ & 2.38 \\
		& 32  & $4.19\mathrm{e}{-6}$ & 3.61
		& $1.63\mathrm{e}{-4}$ & 2.65
		& $2.11\mathrm{e}{-4}$ & 1.85 \\
		\midrule
		
		\multirow{2}{*}{4}
		& 8   & $2.11\mathrm{e}{-6}$ & --
		& $1.28\mathrm{e}{-4}$ & --
		& $1.96\mathrm{e}{-4}$ & -- \\
		& 16  & $6.59\mathrm{e}{-8}$ & 5.00
		& $3.98\mathrm{e}{-6}$ & 5.01
		& $1.23\mathrm{e}{-5}$ & 3.99 \\
		\bottomrule
	\end{tabular}
\end{table}

\begin{figure}[ht]
	\centering
	\includegraphics[width=0.66\textwidth]{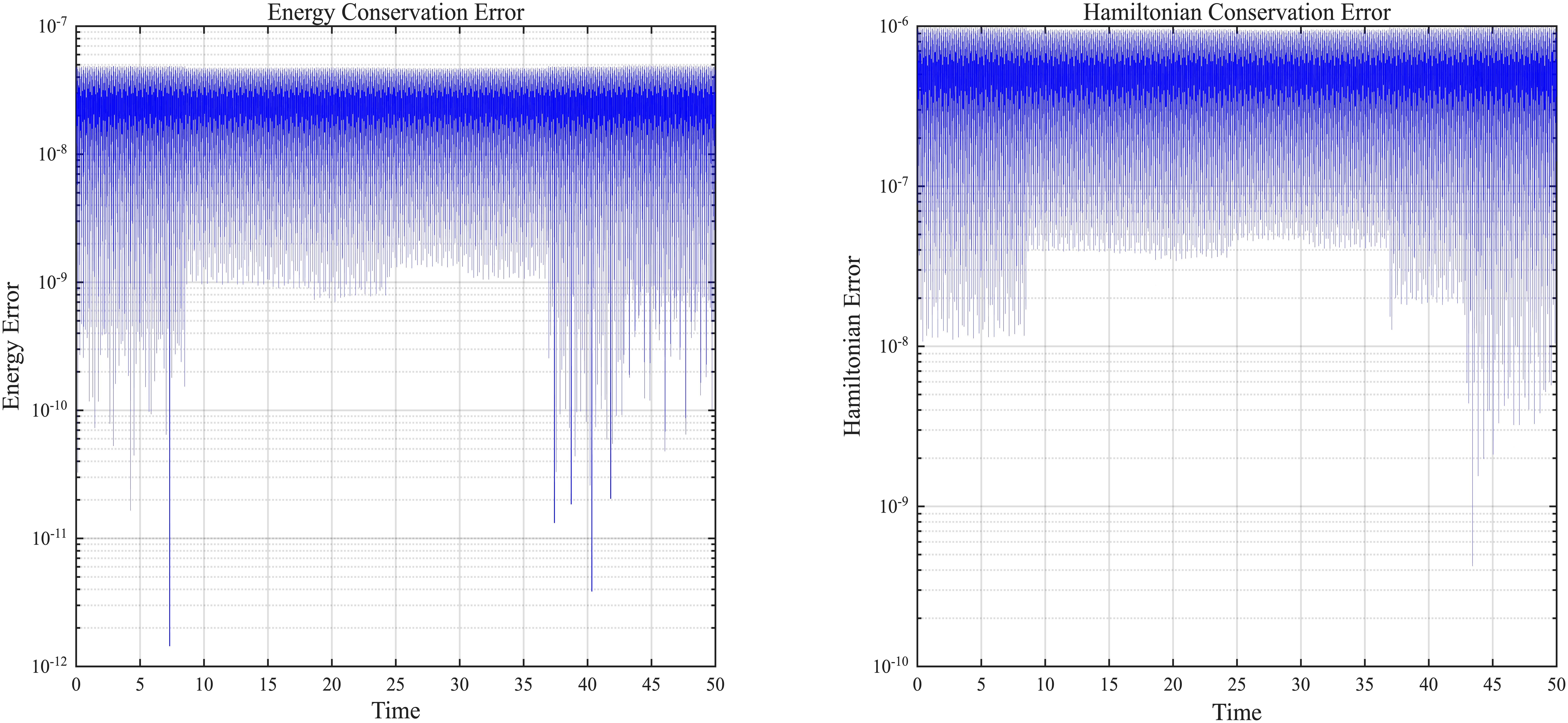}
	
	\vspace{3pt}
	
	\includegraphics[width=0.66\textwidth]{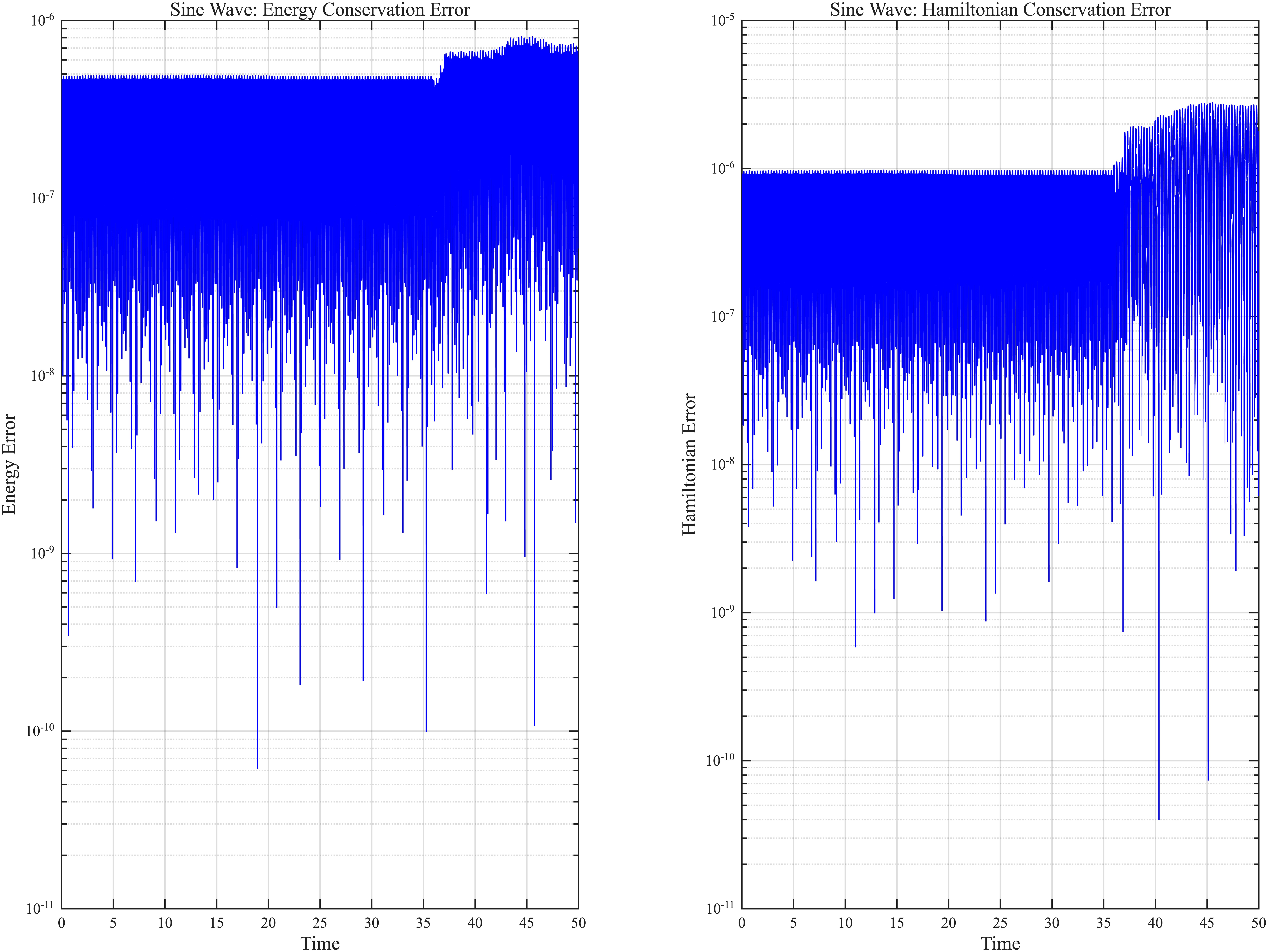}
	
	\vspace{3pt}
	
	\includegraphics[width=0.66\textwidth]{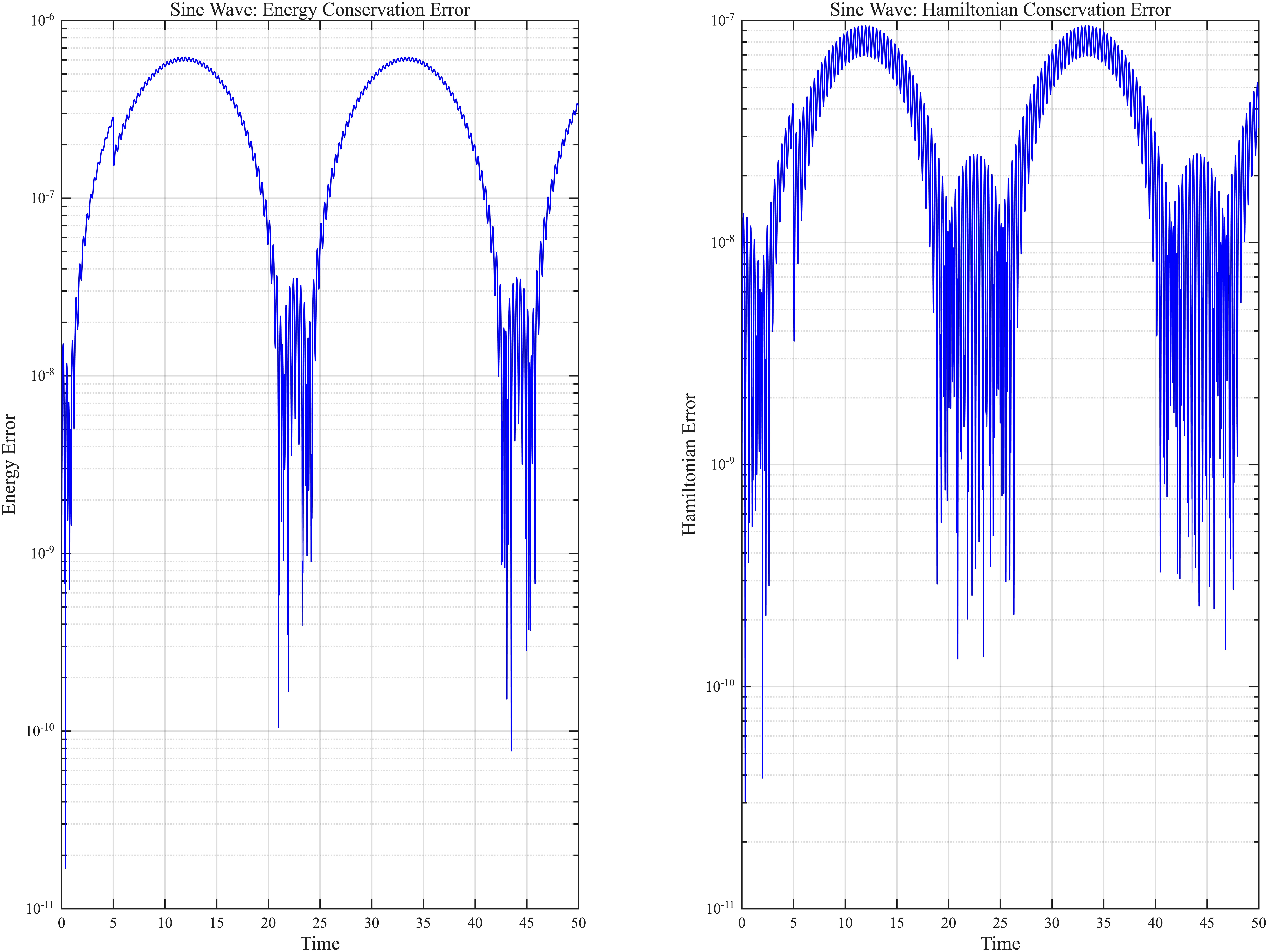}
	
	\caption{Numerical Experiment 2 (third-order nonlinear equation): Errors of energy (left) and Hamiltonian (right) for $\varepsilon = 1$ (top), $0.1$ (middle), and $0.01$ (bottom).}
	\label{fig:5}
\end{figure}

\subsubsection{Experiment 3: Classical KdV Equation with Cnoidal Wave Solution}

In this example, we consider the KdV equation 
$u_t + \varepsilon u_{xxx} + (f(u))_x = 0$ 
with $\varepsilon = \frac{1}{24^2}$ and $f(u) = \frac{u^2}{2}$ on the domain $\Omega = [0,1]$. We test the method using a cnoidal wave solution of the form 
$u(x,t) = A\,\mathrm{cn}^2(z)$,
where $\mathrm{cn}(z) = \mathrm{cn}(z|m)$ denotes the Jacobi elliptic function with modulus $m = 0.9$, and $z = 4K(m)(x - vt - x_0)$; see \cite{bona2013conservative, karakashian2016posteriori, liu2016hamiltonian, zhang2019conservative}.  The amplitude and wave speed are given by $A = 192m\varepsilon K(m)^2$ and $v = 64\varepsilon(2m-1)K(m)^2$, respectively, where $K(m) = \int_{0}^{\pi/2} \frac{d\theta}{\sqrt{1 - m\sin^2\theta}}$ is the complete elliptic integral of the first kind \cite{abramowitz1948handbook}. Since the phase shift parameter $x_0$ is arbitrary, we set $x_0 = 0$. With this choice, the solution $u$ is spatially periodic with period $1$. 

In Table \ref{tab:combined_kdv}, we present the $L^2$-errors of the approximate solutions for $u$, $q$, and $p$ with $k=0,1,2, 3, 4$. The mesh size is \(h=1/N\), where \(N=2^l,\; l=3,\ldots,7\). The final time is \(T=0.1\), and the time step is chosen as \(\Delta t=0.2h\) for \(k=0,2\), \(\Delta t=0.04h\) for \(k=1\), and \(\Delta t=0.01\) for \(k=3,4\). The observed convergence orders are consistent with those obtained in the previous numerical experiments.

Next, we compute the numerical solution using $k=2$ on $32$ intervals over a longer time interval $t\in[0, 50]$. The evolution of the energy and Hamiltonian of the DG solution together with the corresponding errors are shown in Fig. \ref{fig:6}. These results indicate that both the energy and the Hamiltonian are well preserved throughout the entire simulation time.

\begin{table}[H]
	\centering
	\caption{Numerical Experiment 3 (classical KdV equation): \(L^2\)-errors and convergence orders.}
	\label{tab:combined_kdv}
	\renewcommand{\arraystretch}{1.30}
	\begin{tabular}{cc cc cc cc}
		\toprule
		$k$ & $N$
		& \multicolumn{2}{c}{$u_h$}
		& \multicolumn{2}{c}{$q_h$}
		& \multicolumn{2}{c}{$p_h$} \\
		\cmidrule(lr){3-4}
		\cmidrule(lr){5-6}
		\cmidrule(lr){7-8}
		&  & $L_2$ Error & Order & $L_2$ Error & Order & $L_2$ Error & Order \\
		\midrule
		
		\multirow{5}{*}{0}
		& 8   & $5.52\mathrm{e}{-1}$ & --
		& $8.16\mathrm{e}{+0}$ & --
		& $3.39\mathrm{e}{-1}$ & -- \\
		& 16  & $2.70\mathrm{e}{-1}$ & 1.03
		& $4.57\mathrm{e}{+0}$ & 0.84
		& $2.29\mathrm{e}{-1}$ & 0.56 \\
		& 32  & $1.11\mathrm{e}{-1}$ & 1.28
		& $1.92\mathrm{e}{+0}$ & 1.25
		& $8.81\mathrm{e}{-2}$ & 1.38 \\
		& 64  & $4.57\mathrm{e}{-2}$ & 1.28
		& $8.05\mathrm{e}{-1}$ & 1.26
		& $2.98\mathrm{e}{-2}$ & 1.56 \\
		& 128 & $2.21\mathrm{e}{-2}$ & 1.05
		& $3.86\mathrm{e}{-1}$ & 1.06
		& $1.34\mathrm{e}{-2}$ & 1.16 \\
		\midrule
		
		\multirow{5}{*}{1}
		& 8   & $1.57\mathrm{e}{-1}$ & --
		& $2.22\mathrm{e}{+0}$ & --
		& $1.12\mathrm{e}{-1}$ & -- \\
		& 16  & $8.05\mathrm{e}{-2}$ & 0.96
		& $1.67\mathrm{e}{+0}$ & 0.42
		& $7.91\mathrm{e}{-2}$ & 0.50 \\
		& 32  & $2.47\mathrm{e}{-2}$ & 1.70
		& $9.53\mathrm{e}{-1}$ & 0.81
		& $9.97\mathrm{e}{-2}$ & $-0.33$ \\
		& 64  & $6.14\mathrm{e}{-3}$ & 2.01
		& $7.23\mathrm{e}{-1}$ & 0.40
		& $5.20\mathrm{e}{-2}$ & 0.94 \\
		& 128 & $1.93\mathrm{e}{-3}$ & 1.67
		& $3.71\mathrm{e}{-1}$ & 0.96
		& $2.85\mathrm{e}{-2}$ & 0.87 \\
		\midrule
		
		\multirow{4}{*}{2}
		& 8   & $9.84\mathrm{e}{-2}$ & --
		& $2.06\mathrm{e}{+0}$ & --
		& $6.71\mathrm{e}{-2}$ & -- \\
		& 16  & $1.16\mathrm{e}{-2}$ & 3.08
		& $4.89\mathrm{e}{-1}$ & 2.07
		& $6.80\mathrm{e}{-2}$ & $-0.02$ \\
		& 32  & $6.11\mathrm{e}{-4}$ & 4.25
		& $5.53\mathrm{e}{-2}$ & 3.15
		& $1.79\mathrm{e}{-2}$ & 1.92 \\
		& 64  & $5.12\mathrm{e}{-5}$ & 3.58
		& $6.09\mathrm{e}{-3}$ & 3.18
		& $5.14\mathrm{e}{-3}$ & 1.80 \\
		\midrule
		
		\multirow{3}{*}{3}
		& 8   & $1.11\mathrm{e}{-2}$ & --
		& $4.30\mathrm{e}{-1}$ & --
		& $4.67\mathrm{e}{-2}$ & -- \\
		& 16  & $3.88\mathrm{e}{-3}$ & 1.52
		& $1.07\mathrm{e}{-1}$ & 2.01
		& $3.20\mathrm{e}{-2}$ & 0.55 \\
		& 32  & $1.09\mathrm{e}{-4}$ & 5.16
		& $2.20\mathrm{e}{-2}$ & 2.28
		& $9.87\mathrm{e}{-3}$ & 1.70 \\
		\midrule
		
		\multirow{3}{*}{4}
		& 8   & $7.60\mathrm{e}{-3}$ & --
		& $3.16\mathrm{e}{-1}$ & --
		& $4.14\mathrm{e}{-2}$ & -- \\
		& 16  & $1.33\mathrm{e}{-4}$ & 5.84
		& $1.86\mathrm{e}{-2}$ & 4.08
		& $5.93\mathrm{e}{-3}$ & 2.80 \\
		& 32  & $3.30\mathrm{e}{-6}$ & 5.33
		& $6.23\mathrm{e}{-4}$ & 4.90
		& $5.18\mathrm{e}{-4}$ & 3.52 \\
		\bottomrule
	\end{tabular}
\end{table}

\begin{figure}[htbp]
	\centering
	\includegraphics[width=0.9\textwidth]{./Graphics/P3Plot.jpeg}
	\includegraphics[width=0.9\textwidth]{./Graphics/P3Error.jpeg}
	\caption{Numerical Experiment 3 (classical KdV equation): Energy conservation and error (left) and Hamiltonian conservation and error (right) conservation for the Cnoidal wave.}
	\label{fig:6}
\end{figure}

\subsection{Numerical Results for the HS-KdV System}\label{sec:numerics_hs-kdv}
In this subsection, we present numerical experiments to examine the convergence behavior and conservation properties of the proposed DG scheme for the HS--KdV system. Two test problems are considered. In the first example, we consider a forced HS--KdV system with manufactured sinusoidal solutions. In the second example, we consider the HS-KdV system with solitary-wave solutions. For both tests, we investigate the convergence rates and conservation properties of the proposed conservative DG method.

\subsubsection{Experiment 4: Forced HS--KdV System with Sinusoidal Solutions}

In this experiment, we consider the coupled system
\begin{equation*}
    \begin{aligned}
        u_t &= a u_{xxx} + 6a u u_x + 2b v v_x+g_1, \\
        v_t &= -v_{xxx} - 3u v_x+g_2
    \end{aligned}
\end{equation*}
with $a=1, b=1$, and periodic boundary conditions on \(\Omega=[0,1]\). The initial conditions are taken as
$
u(x,0)=\sin(2\pi x)$ and $ v(x,0)=\sin(2\pi x),
$
and $g_1$ and $g_2$ are functions which give the exact solution
\begin{equation*}
            u(x,t) = \sin(2\pi x+t), \qquad         v(x,t) = \sin(2\pi x+t),
\end{equation*}
For the spatial discretization, we use polynomial approximations of degree \(k=0,1,2, 3, 4\). The mesh size is chosen as \(h=1/N\), where \(N=2^l\) with \(l=3,\dots,7\). For the time discretization with IRK4, we take \(\Delta t=0.2h\) for \(k=0\), \(\Delta t=0.04\) for \(k=1\), and \(\Delta t=0.01\) for \(k=2, 3, 4\). The final time is set to \(T=0.1\).

The \(L^2\)-errors and convergence orders in Table \ref{tab:12} indicate that the approximate solutions to the primary variables \(u\) and \(v\) and their first-order derivatives \(q\) and \(w\), attain optimal convergence for even polynomial degrees, whereas suboptimal rates are observed for the odd degree. For auxiliary variables \(p\) and \(r\), the method has optimal convergence rates for \(k=0\) but suboptimal rates for other polynomial degrees. 

We also examine the conservation behavior of the proposed DG scheme. The numerical energy and Hamiltonian obtained with $k=2$ on 32 uniform elements  over the time interval \(t\in[0,50]\) together with their errors are shown in Fig. \ref{fig:hskdv1_plot}. The results indicate that the proposed method preserves these invariants to high accuracy over long-time integration. Since the exact solution in this example is manufactured, this test mainly serves to assess both the accuracy of the scheme and its ability to maintain the expected conservative structure at the discrete level.

\begin{table}[H]
\centering
\caption{Numerical Experiment 4 (forced HS--KdV system with sinusoidal solutions): \(L^2\)-errors and convergence orders.}
\label{tab:12}
\renewcommand{\arraystretch}{1.20}
\resizebox{\textwidth}{!}{
\begin{tabular}{cc cc cc cc|cc cc cc}
\toprule
$k$ & $N$
& \multicolumn{2}{c}{$u_h$}
& \multicolumn{2}{c}{$q_h$}
& \multicolumn{2}{c|}{$p_h$}
& \multicolumn{2}{c}{$v_h$}
& \multicolumn{2}{c}{$w_h$}
& \multicolumn{2}{c}{$r_h$} \\
\cmidrule(lr){3-4}
\cmidrule(lr){5-6}
\cmidrule(lr){7-8}
\cmidrule(lr){9-10}
\cmidrule(lr){11-12}
\cmidrule(lr){13-14}
 &  & $L_2$ Error & Order
    & $L_2$ Error & Order
    & $L_2$ Error & Order
    & $L_2$ Error & Order
    & $L_2$ Error & Order
    & $L_2$ Error & Order \\
\midrule

\multirow{5}{*}{0}
& 8   & $6.05\mathrm{e}{-1}$ & -- & $3.05\mathrm{e}{0}$ & -- & $1.54\mathrm{e}{1}$ & -- & $5.02\mathrm{e}{-1}$ & -- & $2.47\mathrm{e}{0}$ & -- & $1.23\mathrm{e}{1}$ & -- \\
& 16  & $1.62\mathrm{e}{-1}$ & 1.90 & $9.09\mathrm{e}{-1}$ & 1.75 & $5.09\mathrm{e}{0}$ & 1.60 & $1.38\mathrm{e}{-1}$ & 1.86 & $7.62\mathrm{e}{-1}$ & 1.70 & $4.24\mathrm{e}{0}$ & 1.54 \\
& 32  & $4.25\mathrm{e}{-2}$ & 1.93 & $2.68\mathrm{e}{-1}$ & 1.76 & $1.71\mathrm{e}{0}$ & 1.57 & $4.24\mathrm{e}{-2}$ & 1.70 & $2.63\mathrm{e}{-1}$ & 1.53 & $1.65\mathrm{e}{0}$ & 1.36 \\
& 64  & $2.01\mathrm{e}{-2}$ & 1.08 & $1.26\mathrm{e}{-1}$ & 1.08 & $8.00\mathrm{e}{-1}$ & 1.10 & $2.01\mathrm{e}{-2}$ & 1.08 & $1.26\mathrm{e}{-1}$ & 1.06 & $7.97\mathrm{e}{-1}$ & 1.05 \\
& 128 & $1.00\mathrm{e}{-2}$ & 1.00 & $6.30\mathrm{e}{-2}$ & 1.00 & $3.97\mathrm{e}{-1}$ & 1.01 & $1.00\mathrm{e}{-2}$ & 1.01 & $6.30\mathrm{e}{-2}$ & 1.01 & $3.96\mathrm{e}{-1}$ & 1.01 \\
\midrule

\multirow{5}{*}{1}
& 8   & $5.13\mathrm{e}{-2}$ & -- & $9.30\mathrm{e}{-1}$ & -- & $1.04\mathrm{e}{1}$ & -- & $4.82\mathrm{e}{-2}$ & -- & $9.33\mathrm{e}{-1}$ & -- & $1.03\mathrm{e}{1}$ & -- \\
& 16  & $1.40\mathrm{e}{-2}$ & 1.87 & $4.91\mathrm{e}{-1}$ & 0.92 & $6.06\mathrm{e}{0}$ & 0.78 & $1.25\mathrm{e}{-2}$ & 1.95 & $4.93\mathrm{e}{-1}$ & 0.92 & $6.02\mathrm{e}{0}$ & 0.78 \\
& 32  & $4.55\mathrm{e}{-3}$ & 1.62 & $2.49\mathrm{e}{-1}$ & 0.98 & $3.15\mathrm{e}{0}$ & 0.94 & $3.60\mathrm{e}{-3}$ & 1.80 & $2.50\mathrm{e}{-1}$ & 0.98 & $3.13\mathrm{e}{0}$ & 0.94 \\
& 64  & $1.85\mathrm{e}{-3}$ & 1.30 & $1.25\mathrm{e}{-1}$ & 1.00 & $1.59\mathrm{e}{0}$ & 0.99 & $1.27\mathrm{e}{-3}$ & 1.50 & $1.26\mathrm{e}{-1}$ & 0.99 & $1.58\mathrm{e}{0}$ & 0.99 \\
& 128 & $9.59\mathrm{e}{-4}$ & 0.95 & $6.27\mathrm{e}{-2}$ & 1.00 & $7.98\mathrm{e}{-1}$ & 0.99 & $5.50\mathrm{e}{-4}$ & 1.21 & $6.29\mathrm{e}{-2}$ & 1.00 & $7.91\mathrm{e}{-1}$ & 1.00 \\
\midrule

\multirow{4}{*}{2}
& 8   & $1.08\mathrm{e}{-3}$ & -- & $2.85\mathrm{e}{-2}$ & -- & $1.83\mathrm{e}{0}$ & -- & $1.07\mathrm{e}{-3}$ & -- & $2.85\mathrm{e}{-2}$ & -- & $1.83\mathrm{e}{0}$ & -- \\
& 16  & $1.35\mathrm{e}{-4}$ & 3.00 & $3.47\mathrm{e}{-3}$ & 3.04 & $4.27\mathrm{e}{-1}$ & 2.10 & $1.35\mathrm{e}{-4}$ & 3.00 & $3.47\mathrm{e}{-3}$ & 3.04 & $4.27\mathrm{e}{-1}$ & 2.10 \\
& 32  & $1.72\mathrm{e}{-5}$ & 2.97 & $4.31\mathrm{e}{-4}$ & 3.01 & $1.04\mathrm{e}{-1}$ & 2.04 & $1.69\mathrm{e}{-5}$ & 3.00 & $4.31\mathrm{e}{-4}$ & 3.01 & $1.04\mathrm{e}{-1}$ & 2.04 \\
& 64  & $2.11\mathrm{e}{-6}$ & 3.03 & $5.37\mathrm{e}{-5}$ & 3.00 & $2.58\mathrm{e}{-2}$ & 2.01 & $2.11\mathrm{e}{-6}$ & 3.00 & $5.37\mathrm{e}{-5}$ & 3.00 & $2.58\mathrm{e}{-2}$ & 2.01 \\
\midrule

\multirow{3}{*}{3}
& 8   & $6.55\mathrm{e}{-5}$ & -- & $7.32\mathrm{e}{-3}$ & -- & $3.96\mathrm{e}{-1}$ & -- & $5.63\mathrm{e}{-5}$ & -- & $7.34\mathrm{e}{-3}$ & -- & $3.96\mathrm{e}{-1}$ & -- \\
& 16  & $5.38\mathrm{e}{-6}$ & 3.61 & $9.86\mathrm{e}{-4}$ & 2.89 & $7.98\mathrm{e}{-2}$ & 2.31 & $3.55\mathrm{e}{-6}$ & 3.99 & $9.88\mathrm{e}{-4}$ & 2.89 & $7.98\mathrm{e}{-2}$ & 2.31 \\
& 32  & $5.58\mathrm{e}{-7}$ & 3.27 & $1.25\mathrm{e}{-4}$ & 2.98 & $1.80\mathrm{e}{-2}$ & 2.15 & $2.40\mathrm{e}{-7}$ & 3.88 & $1.26\mathrm{e}{-4}$ & 2.97 & $1.80\mathrm{e}{-2}$ & 2.15 \\
\midrule

\multirow{3}{*}{4}
& 8   & $2.10\mathrm{e}{-6}$ & -- & $1.28\mathrm{e}{-4}$ & -- & $1.96\mathrm{e}{-2}$ & -- & $2.10\mathrm{e}{-6}$ & -- & $1.28\mathrm{e}{-4}$ & -- & $1.95\mathrm{e}{-2}$ & -- \\
& 16  & $6.61\mathrm{e}{-8}$ & 4.99 & $3.98\mathrm{e}{-6}$ & 5.01 & $1.23\mathrm{e}{-3}$ & 3.99 & $6.62\mathrm{e}{-8}$ & 4.99 & $3.98\mathrm{e}{-6}$ & 5.00 & $1.23\mathrm{e}{-3}$ & 3.99 \\
\bottomrule
\end{tabular}}
\end{table}

\begin{table}[H]
\centering
\caption{Numerical Experiment 5 (HS--KdV System with Solitary-Wave Solutions): $L^2$-errors and convergence orders.}
\label{tab:new}
\renewcommand{\arraystretch}{1.20}
\resizebox{\textwidth}{!}{
\begin{tabular}{cc cc cc cc|cc cc cc}
\toprule
$k$ & $N$
& \multicolumn{2}{c}{$u_h$}
& \multicolumn{2}{c}{$q_h$}
& \multicolumn{2}{c|}{$p_h$}
& \multicolumn{2}{c}{$v_h$}
& \multicolumn{2}{c}{$w_h$}
& \multicolumn{2}{c}{$r_h$} \\
\cmidrule(lr){3-4}
\cmidrule(lr){5-6}
\cmidrule(lr){7-8}
\cmidrule(lr){9-10}
\cmidrule(lr){11-12}
\cmidrule(lr){13-14}
 &  & $L_2$ Error & Order
    & $L_2$ Error & Order
    & $L_2$ Error & Order
    & $L_2$ Error & Order
    & $L_2$ Error & Order
    & $L_2$ Error & Order \\
\midrule

\multirow{6}{*}{0}
& 8   & $7.09\mathrm{e}{-1}$ & -- & $3.58\mathrm{e}{-1}$ & -- & $7.71\mathrm{e}{-1}$ & -- & $2.24\mathrm{e}{-1}$ & -- & $7.88\mathrm{e}{-2}$ & -- & $4.93\mathrm{e}{-2}$ & -- \\
& 16  & $5.82\mathrm{e}{-1}$ & 0.28 & $3.36\mathrm{e}{-1}$ & 0.09 & $6.50\mathrm{e}{-1}$ & 0.25 & $1.54\mathrm{e}{-1}$ & 0.54 & $6.87\mathrm{e}{-2}$ & 0.20 & $4.90\mathrm{e}{-2}$ & 0.01 \\
& 32  & $3.40\mathrm{e}{-1}$ & 0.78 & $2.74\mathrm{e}{-1}$ & 0.30 & $3.73\mathrm{e}{-1}$ & 0.80 & $7.56\mathrm{e}{-2}$ & 1.03 & $4.85\mathrm{e}{-2}$ & 0.50 & $4.41\mathrm{e}{-2}$ & 0.15 \\
& 64  & $1.60\mathrm{e}{-1}$ & 1.08 & $1.63\mathrm{e}{-1}$ & 0.74 & $2.40\mathrm{e}{-1}$ & 0.63 & $3.71\mathrm{e}{-2}$ & 1.03 & $2.55\mathrm{e}{-2}$ & 0.93 & $2.82\mathrm{e}{-2}$ & 0.65 \\
& 128 & $8.18\mathrm{e}{-2}$ & 0.97 & $7.61\mathrm{e}{-2}$ & 1.10 & $1.18\mathrm{e}{-1}$ & 1.02 & $1.88\mathrm{e}{-2}$ & 0.98 & $1.19\mathrm{e}{-2}$ & 1.10 & $1.31\mathrm{e}{-2}$ & 1.11 \\
& 256 & $4.11\mathrm{e}{-2}$ & 0.99 & $3.58\mathrm{e}{-2}$ & 1.09 & $4.90\mathrm{e}{-2}$ & 1.27 & $9.40\mathrm{e}{-3}$ & 1.00 & $5.69\mathrm{e}{-3}$ & 1.06 & $5.60\mathrm{e}{-3}$ & 1.22 \\
\midrule

\multirow{6}{*}{1}
& 8   & $4.56\mathrm{e}{-1}$ & -- & $2.95\mathrm{e}{-1}$ & -- & $5.63\mathrm{e}{-1}$ & -- & $1.05\mathrm{e}{-1}$ & -- & $5.47\mathrm{e}{-2}$ & -- & $4.66\mathrm{e}{-2}$ & -- \\
& 16  & $2.15\mathrm{e}{-1}$ & 1.09 & $2.21\mathrm{e}{-1}$ & 0.42 & $2.84\mathrm{e}{-1}$ & 0.99 & $3.10\mathrm{e}{-2}$ & 1.76 & $3.81\mathrm{e}{-2}$ & 0.52 & $3.46\mathrm{e}{-2}$ & 0.43 \\
& 32  & $6.43\mathrm{e}{-2}$ & 1.74 & $1.31\mathrm{e}{-1}$ & 0.75 & $1.13\mathrm{e}{-1}$ & 1.33 & $1.28\mathrm{e}{-2}$ & 1.28 & $2.23\mathrm{e}{-2}$ & 0.77 & $1.44\mathrm{e}{-2}$ & 1.27 \\
& 64  & $3.49\mathrm{e}{-2}$ & 0.88 & $9.16\mathrm{e}{-2}$ & 0.52 & $1.23\mathrm{e}{-1}$ & -0.13 & $5.52\mathrm{e}{-3}$ & 1.21 & $1.63\mathrm{e}{-2}$ & 0.45 & $2.17\mathrm{e}{-2}$ & -0.59 \\
& 128 & $7.90\mathrm{e}{-3}$ & 2.14 & $6.07\mathrm{e}{-2}$ & 0.59 & $1.13\mathrm{e}{-1}$ & 0.13 & $1.39\mathrm{e}{-3}$ & 1.99 & $1.03\mathrm{e}{-2}$ & 0.66 & $1.60\mathrm{e}{-2}$ & 0.44 \\
& 256 & $1.81\mathrm{e}{-3}$ & 2.13 & $3.36\mathrm{e}{-2}$ & 0.85 & $7.38\mathrm{e}{-2}$ & 0.61 & $3.80\mathrm{e}{-4}$ & 1.87 & $5.47\mathrm{e}{-3}$ & 0.92 & $9.27\mathrm{e}{-3}$ & 0.79 \\
\midrule

\multirow{6}{*}{2}
& 8   & $2.12\mathrm{e}{-1}$ & -- & $2.01\mathrm{e}{-1}$ & -- & $2.73\mathrm{e}{-1}$ & -- & $3.02\mathrm{e}{-2}$ & -- & $3.64\mathrm{e}{-2}$ & -- & $3.47\mathrm{e}{-2}$ & -- \\
& 16  & $7.30\mathrm{e}{-2}$ & 1.54 & $1.69\mathrm{e}{-1}$ & 0.25 & $1.30\mathrm{e}{-1}$ & 1.07 & $1.54\mathrm{e}{-2}$ & 0.97 & $3.34\mathrm{e}{-2}$ & 0.12 & $2.01\mathrm{e}{-2}$ & 0.79 \\
& 32  & $3.29\mathrm{e}{-2}$ & 1.15 & $1.18\mathrm{e}{-1}$ & 0.52 & $1.10\mathrm{e}{-1}$ & 0.24 & $5.39\mathrm{e}{-3}$ & 1.52 & $1.55\mathrm{e}{-2}$ & 1.11 & $1.99\mathrm{e}{-2}$ & 0.02 \\
& 64  & $5.55\mathrm{e}{-3}$ & 2.57 & $2.49\mathrm{e}{-2}$ & 2.25 & $9.50\mathrm{e}{-2}$ & 0.21 & $1.83\mathrm{e}{-3}$ & 1.56 & $3.88\mathrm{e}{-3}$ & 2.00 & $1.24\mathrm{e}{-2}$ & 0.68 \\
& 128 & $8.54\mathrm{e}{-4}$ & 2.70 & $4.23\mathrm{e}{-3}$ & 2.56 & $3.62\mathrm{e}{-2}$ & 1.39 & $8.74\mathrm{e}{-5}$ & 4.38 & $3.84\mathrm{e}{-4}$ & 3.34 & $3.53\mathrm{e}{-3}$ & 1.81 \\
& 256 & $7.51\mathrm{e}{-5}$ & 3.51 & $4.47\mathrm{e}{-4}$ & 3.24 & $8.70\mathrm{e}{-3}$ & 2.06 & $8.70\mathrm{e}{-6}$ & 3.33 & $4.15\mathrm{e}{-5}$ & 3.21 & $8.23\mathrm{e}{-4}$ & 2.10 \\
\midrule

\multirow{6}{*}{3}
& 8   & $9.87\mathrm{e}{-2}$ & -- & $1.88\mathrm{e}{-1}$ & -- & $1.52\mathrm{e}{-1}$ & -- & $2.08\mathrm{e}{-2}$ & -- & $3.85\mathrm{e}{-2}$ & -- & $2.69\mathrm{e}{-2}$ & -- \\
& 16  & $5.42\mathrm{e}{-2}$ & 0.87 & $1.48\mathrm{e}{-1}$ & 0.34 & $1.39\mathrm{e}{-1}$ & 0.14 & $8.34\mathrm{e}{-3}$ & 1.32 & $1.88\mathrm{e}{-2}$ & 1.04 & $2.37\mathrm{e}{-2}$ & 0.18 \\
& 32  & $6.70\mathrm{e}{-3}$ & 3.02 & $2.50\mathrm{e}{-2}$ & 2.57 & $7.72\mathrm{e}{-2}$ & 0.85 & $1.56\mathrm{e}{-3}$ & 2.42 & $3.15\mathrm{e}{-3}$ & 2.57 & $8.74\mathrm{e}{-3}$ & 1.44 \\
& 64  & $8.01\mathrm{e}{-4}$ & 3.06 & $3.17\mathrm{e}{-3}$ & 2.98 & $2.10\mathrm{e}{-2}$ & 1.88 & $9.99\mathrm{e}{-5}$ & 3.97 & $3.72\mathrm{e}{-4}$ & 3.08 & $2.50\mathrm{e}{-3}$ & 1.80 \\
& 128 & $1.01\mathrm{e}{-4}$ & 2.98 & $6.66\mathrm{e}{-4}$ & 2.25 & $8.07\mathrm{e}{-3}$ & 1.38 & $1.16\mathrm{e}{-5}$ & 3.10 & $7.06\mathrm{e}{-5}$ & 2.40 & $7.81\mathrm{e}{-4}$ & 1.68 \\
& 256 & $1.19\mathrm{e}{-5}$ & 3.09 & $9.51\mathrm{e}{-5}$ & 2.81 & $1.68\mathrm{e}{-3}$ & 2.27 & $1.45\mathrm{e}{-6}$ & 3.01 & $1.01\mathrm{e}{-5}$ & 2.81 & $1.65\mathrm{e}{-4}$ & 2.24 \\
\midrule

\multirow{5}{*}{4}
& 8   & $7.81\mathrm{e}{-2}$ & -- & $1.93\mathrm{e}{-1}$ & -- & $1.44\mathrm{e}{-1}$ & -- & $1.47\mathrm{e}{-2}$ & -- & $2.98\mathrm{e}{-2}$ & -- & $2.87\mathrm{e}{-2}$ & -- \\
& 16  & $2.07\mathrm{e}{-2}$ & 1.91 & $6.26\mathrm{e}{-2}$ & 1.62 & $1.17\mathrm{e}{-1}$ & 0.30 & $3.19\mathrm{e}{-3}$ & 2.20 & $5.57\mathrm{e}{-3}$ & 2.42 & $1.53\mathrm{e}{-2}$ & 0.91 \\
& 32  & $2.86\mathrm{e}{-3}$ & 2.86 & $1.16\mathrm{e}{-2}$ & 2.43 & $4.11\mathrm{e}{-2}$ & 1.51 & $3.76\mathrm{e}{-4}$ & 3.09 & $1.51\mathrm{e}{-3}$ & 1.89 & $6.38\mathrm{e}{-3}$ & 1.26 \\
& 64  & $4.48\mathrm{e}{-4}$ & 2.67 & $1.37\mathrm{e}{-3}$ & 3.08 & $8.48\mathrm{e}{-3}$ & 2.28 & $1.03\mathrm{e}{-4}$ & 1.87 & $3.65\mathrm{e}{-4}$ & 2.04 & $2.56\mathrm{e}{-3}$ & 1.32 \\
& 128 & $8.13\mathrm{e}{-6}$ & 5.78 & $6.61\mathrm{e}{-5}$ & 4.38 & $1.18\mathrm{e}{-3}$ & 2.85 & $6.20\mathrm{e}{-7}$ & 7.37 & $4.50\mathrm{e}{-6}$ & 6.34 & $8.66\mathrm{e}{-5}$ & 4.89 \\
\bottomrule
\end{tabular}}
\end{table}

\begin{figure}[h]
    \centering
    \includegraphics[width=0.9\textwidth]{./Graphics/P1HSKdVPlot.jpeg}
    \includegraphics[width=0.9\textwidth]{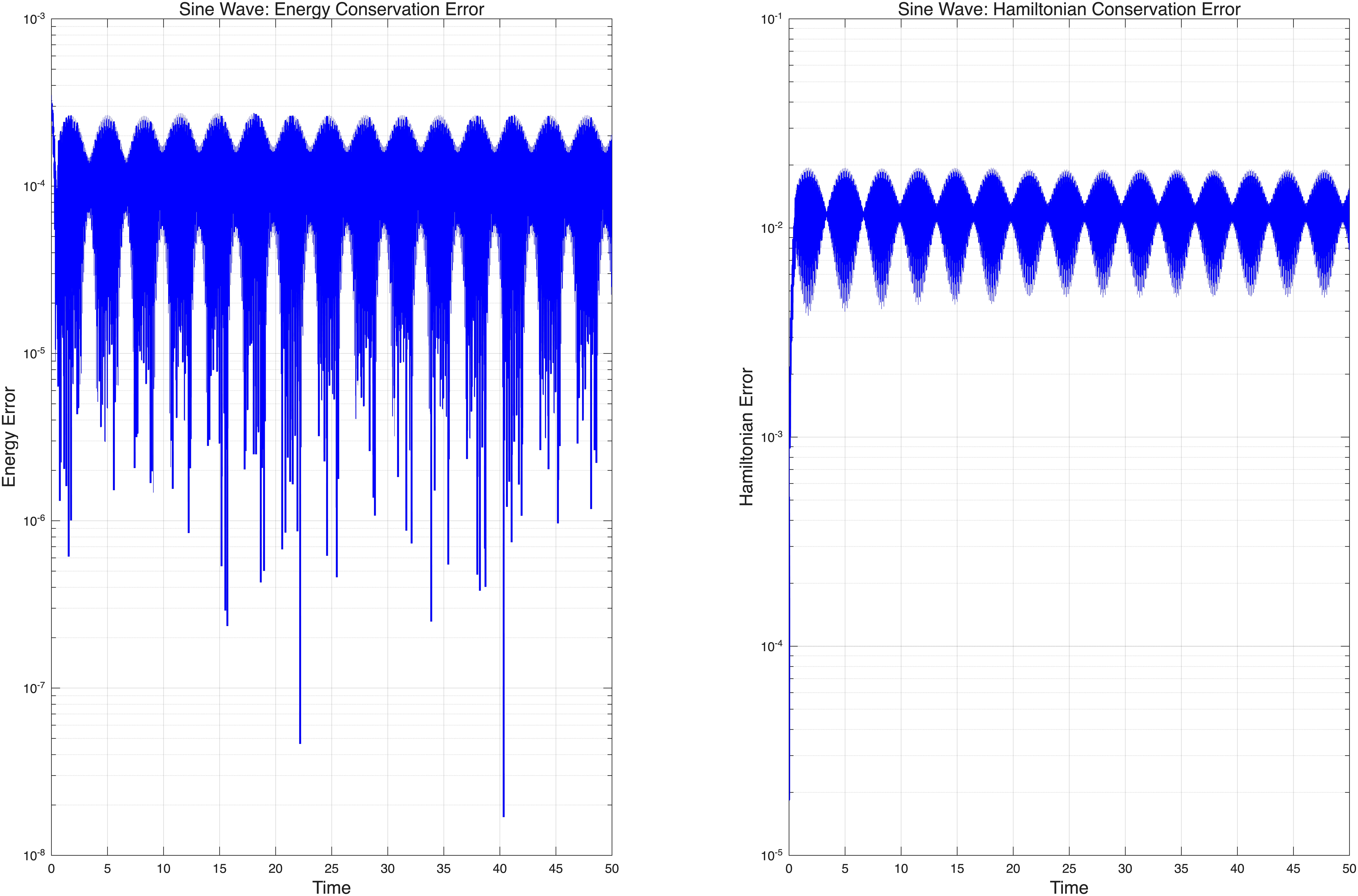}
    \caption{Numerical Experiment 4 (forced HS--KdV system with sinusoidal solutions): Conservation of energy with corresponding error (left) and Hamiltonian conservation with corresponding error (right).}
    \label{fig:hskdv1_plot}
\end{figure}

\subsubsection{Experiment 5: HS--KdV System with Solitary-Wave Solutions}

In this experiment, we consider the HS--KdV system \eqref{eq:hskdv} with parameters \(a=-0.125\) and \(b=-3\); see \cite{ismail2014numerical}. The computational domain is taken as \(\Omega=[-50,50]\), which is sufficiently large to accommodate the solitary wave under periodic boundary conditions. The initial conditions are given by
\begin{equation*}
u_0 = 2\lambda^2 \sech^2(\xi), \quad v_0 = \frac{1}{2\sqrt{\omega}} \sech(\xi),\quad \textrm{ where } \quad
\xi = \lambda x + \frac{1}{2\log(\omega)}, \quad
\omega = -\frac{b}{8(4a+1)\lambda^4}, \quad \lambda = 0.5.
\end{equation*}
The corresponding exact solution is
\begin{equation*}
u(x,t) = 2\lambda^2 \sech^2(\xi), \quad
v(x,t) = \frac{1}{2\sqrt{\omega}} \sech(\xi), \quad \textrm{ where } \quad
\xi = \lambda(x-\lambda^2 t) + \frac{1}{2\log(\omega)}, \quad
\omega = -\frac{b}{8(4a+1)\lambda^4}.
\end{equation*}

We next examine the convergence of the proposed DG method for this solitary-wave problem. Polynomial degrees \(k=0,1,2, 3, 4\) are employed in the spatial discretization. The mesh size is chosen as \(h=100/N\), where \(N=2^l\), \(l=3,\ldots,7\), and the time step is taken as \(\Delta t=0.01\). 

The corresponding \(L^2\)-errors and convergence orders are reported in Table \ref{tab:new}. We observe that the approximate solutions to the primary variables $u$ and $v$, as well as their first-order spatial derivatives $q$ and $w$, have optimal convergence rates for even polynomial degrees $k=0, 2$ and suboptimal convergence rates for odd degrees $k=1, 3$. This is typical for energy-conserving DG methods. For the variables $p$ and $r$ which involve second-order spatial derivatives, the convergence behavior is generally suboptimal and somewhat irregular. 

We also examine the conservation behavior of the proposed DG scheme. The numerical energy and Hamiltonian computed with $k=2$ on 32 uniform elements over the time interval \(t\in[0,50]\) together with their errors are shown in Fig. \ref{fig:hskdv2_plot}. The results indicate that the proposed method preserves these invariants to reasonable accuracy over long-time integration. 

\begin{figure}[h]
    \centering
    \includegraphics[width=0.9\textwidth]{./Graphics/P2K2N32Plot.jpeg}
    \includegraphics[width=0.9\textwidth]{./Graphics/P2K2N32EHError.jpeg}
    \caption{Numerical Experiment 5 (HS--KdV system with solitary-wave solutions): Conservation of energy with corresponding error (left) and Hamiltonian conservation with corresponding error (right).}
    \label{fig:hskdv2_plot}
\end{figure}

\section{Conclusion}\label{sec:conclusion}
In this paper, we present a new conservative DG method for the gKdV equation that simultaneously conserves the mass, $L^2$ energy, and Hamiltonian of the solution. By carefully designing the numerical traces with implicitly defined penalization parameters, our DG formulation eliminates the time-derivative of a jump term in \cite{ChenDongPereira2022}, allowing for seamless integration with high-order time discretizations while significantly improving computational efficiency. 
Furthermore, we extend the implicit penalization framework to the HS--KdV system and obtain, to the best of our knowledge, the first DG discretization capable of simultaneously preserving three invariants of the system. Numerical results verify that both methods achieve optimal convergence rates for even polynomial degrees and suboptimal rates for odd degrees in the primary variables, and demonstrate the robustness of the proposed schemes for long-time simulations.

Building on the success of the current framework, our future work will focus on extending the conservative DG method via implicit penalization to a broader range of dispersive problems. We plan to apply this approach to the Benjamin--Bona--Mahony (BBM) equation and the Schr\"odinger equation, as well as the Schr\"odinger--KdV system, which will serve as a stepping stone to generalize these techniques to more complex coupled systems and higher-dimensional settings.

\bibliographystyle{plain}
\bibliography{mybibtext}


\appendix
\section{Proof of Lemma \ref{lemma:nonlinear-coupling}}\label{appendix:proof_lemma}  

We write $S$ as
\begin{equation}\label{eq:Snew}
S=S_1+S_2- 6b\left(u_h w_h,\Pi (u_h v_h)\right)- 6b\left(u_h w_h,r_h\right),
\end{equation}
where 
\begin{equation*}
S_1=2b(v_{hx}, v_h p_h),\quad
S_2= 2b\left(r_h,\Pi (u_h v_h)_x\right) - 2b\langle \widehat{r}_h,\Pi (u_h v_h) n\rangle.
\end{equation*}
Applying integration by parts to  \eqref{4.3d} and taking  $\xi=2b\Pi(v_h p_h)$, we obtain
\begin{align*}
    S_1&=2b\left(w_h, \Pi(v_h p_h)\right)-2b\langle \hat{v}_h-v_h, \Pi(v_h p_h)n\rangle\\
    &=2b(w_h, v_h p_h)-2b\langle \hat{v}_h-v_h, \Pi(v_h p_h)n\rangle,
 \end{align*} 
where we used the orthogonality property of the $L^2$ projection operator $\Pi$. This shows that $v_{hx}$ is replaced by $w_h$ in the volume integral, up to a boundary correction term.

Similarly, using \eqref{4.3b} with $\beta=\Pi(w_h v_h)$ and integrating by parts, we obtain
\begin{align*}
    S_1&=2b(q_{hx}, w_h v_h)+2b\langle\hat{q}_h-q_h, \Pi(w_h v_h) n\rangle +6b\left(u_h^2, \Pi(w_h v_h)\right)-2b\langle \hat{v}_h-v_h, \Pi(v_h p_h)n\rangle.
\end{align*} 
This corresponds to replacing $p_h$ by $q_{hx}+3u_h^2$ in the volume integral  with additional boundary terms. 

Next, we rewrite the first term of $S_1$. Applying integration by parts, choosing test functions $\psi=\Pi(q_h v_h)$ in
\eqref{4.3e} and $\xi=\Pi(q_h w_h)$ in \eqref{4.3d}, 
we obtain
\begin{equation}\label{eq:1st_term}
\begin{aligned}
    2b(q_{hx}, w_h v_h)=&-2b(q_h, w_{hx} v_h +w_h v_{hx})+2b\langle q_h, w_h v_h n\rangle\\
    =&-2b(w_{hx}, q_h v_h)-2b(v_{hx}, q_h w_h)+2b\langle q_h, w_h v_h n\rangle\\
    =&-2b(r_h, q_h v_h)+2b\langle\hat{w}_h-w_h, \Pi(q_h v_h) n\rangle \\
    &-2b(w_h, q_h w_h)+2b\langle \hat{v}_h-v_h, \Pi(q_h w_h)n\rangle\\
    &+2b\langle q_h, w_h v_h n\rangle.
\end{aligned}
\end{equation}
Let us rewrite the third term on the right hand side. Taking $\alpha=\Pi(w_h^2)$ in \eqref{4.3a}, applying integration by parts, and taking $\psi=\Pi(u_h w_h)$ in \eqref{4.3e} to get
\begin{equation}
\begin{aligned}\label{eq:3rd_term}
    -2b(w_h, q_h w_h)=&-2b(u_{hx},w_h^2)-2b\langle \hat{u}_h-u_h, \Pi(w_h^2) n\rangle\\
    =& 4b(u_h, w_h w_{hx})-2b\langle u_h, w_h^2 n\rangle-2b\langle \hat{u}_h-u_h, \Pi(w_h^2) n\rangle\\
   =& 4b(r_h, u_h w_h)-4b\langle \hat{w}_h-w_h, \Pi(u_h w_h)n\rangle-2b\langle u_h, w_h^2 n\rangle-2b\langle \hat{u}_h-u_h, \Pi(w_h^2) n\rangle.
\end{aligned}
\end{equation}
Substituting \eqref{eq:3rd_term} into \eqref{eq:1st_term}, and then inserting the result into $S_1$, we obtain
\begin{equation}\label{eq:S1}
\begin{aligned}
S_1=&-2b(r_h, q_h v_h)+4b(r_h, u_h w_h) +6b\left(u_h^2, \Pi(w_h v_h)\right)\\
&-2b\langle \hat{v}_h-v_h, \Pi(v_h p_h)n\rangle+ 2b
\mathcal{T}(q_h, w_h, v_h) -2b\mathcal{T}(u_h, w_h, w_h),   
\end{aligned}
\end{equation}
where $$\mathcal{T}(\phi_1,\phi_2,\phi_3):=\langle \hat{\phi}_1-\phi_1, \Pi(\phi_2 \phi_3) n\rangle +\langle \hat{\phi}_2-\phi_2, \Pi(\phi_1 \phi_3) n\rangle +\langle \hat{\phi}_3-\phi_3, \Pi(\phi_1 \phi_2) n\rangle.
$$

Now we rewrite $S_2$. Using integration by parts, we have
\begin{align*}
S_2=&-2b(r_{hx}, u_h v_h)-2b\langle \hat{r}_h -r_h, \Pi(u_h v_h) n\rangle\\
   =&2b(r_h, u_{hx} v_h +u_h v_{hx})-2b\langle r_h, u_h v_h n\rangle -2b\langle \hat{r}_h -r_h, \Pi(u_h v_h) n\rangle.
   \end{align*}
Taking $\alpha=\Pi(r_h v_h)$ in \eqref{4.3a} and $\xi=\Pi (r_h u_u)$ in \eqref{4.3d}, we obtain
\begin{equation}\label{eq:S2}
        S_2=2b(q_h, r_h v_h)+2b(w_h, r_h u_h)-2b\mathcal{T}(r_h, u_h, v_h).
\end{equation}
Combining \eqref{eq:Snew}, \eqref{eq:S1}, and \eqref{eq:S2}, and using \eqref{4.17} to express boundary integrals in terms of averages and jumps, we complete the proof of Lemma \ref{lemma:nonlinear-coupling}.

\section{Implementation details}\label{appendix:implementation}
In this part, we describe the implementation of the conservative DG methods for the gKdV equation and the HS-KdV system. Our numerical experiments in Section \ref{sec:numerics} are performed using the IRK4 scheme \eqref{eq:IRK4} as the time-stepping method. For clarity of presentation, we describe the implementation of  our conservative DG methods using the Implicit Midpoint rule  \eqref{2.11}-\eqref{2.12}, while noting that the extension to IRK4 follows analogously with multiple stage variables.

\subsection{Implementation of the conservative DG for the gKdV equation}\label{sec:implementation_kdv}
When applying the Implicit Midpoint rule \eqref{2.11}--\eqref{2.12} to our DG formulation defined by \eqref{DEqs}, \eqref{NTU}, and \eqref{EH}, we obtain the fully discrete scheme, which can be written in matrix–vector form:
\begin{subequations}
	\begin{align}
		&M[q_h] + (D+A)[u_h] = 0, \label{2.15a}\\
		&M[p_h] + \epsilon (D+A)[q_h] - M[f(u_h)] = 0, \label{2.15b}\\
		&M[u_h] - \frac{\Delta t}{2}(D+A)[p_h]
		- \frac{\Delta t}{2}\tau_{pq} J[q_h]
		- \frac{\Delta t}{2}\tau_{pu} J[u_h]
		- M[\bar{u}_h]
		- \frac{\Delta t}{2} M[g] = 0, \label{2.15c}\\
		&\tau_{pu} \sum_{i=1}^{N} \jmp{u_h}^2(x_i)+\tau_{pq} \sum_{i=1}^{N} \jmp{u_h}\jmp{q_h}(x_i)-\sum_{i=1}^{N} \Big( \jmp{V(u_h)} - \{\Pi f(u_h)\}\jmp{u_h} \Big)(x_i)=0, \label{2.15d}\\
		& \tau_{pu}  \sum_{i=1}^N \jmp{p_h}\jmp{u_h}(x_i) +\tau_{pq} \sum_{i=1}^N \jmp{p_h}\jmp{q_h}(x_i)= 0.\label{2.15e}
	\end{align}
\end{subequations}
Here, the vectors \([u_h]\), \([q_h]\), and \([p_h]\) represent the degrees of freedom corresponding to the DG solutions \(u_h^{n+\frac{1}{2}}\), \(q_h^{n+\frac{1}{2}}\), and \(p_h^{n+\frac{1}{2}}\), respectively, and \([\bar{u}_h]\) denotes the known solution at the previous time level \(u_h^{n}\). The matrices \textbf{M}, \textbf{D}, \textbf{A}, and \textbf{J} denote the standard mass matrix, derivative operator, average flux, and  interface jump contributions, respectively.

Starting from the initial condition, \(u_h^0\) is known. Using \eqref{2.15a}, one computes \(q_h^0\), and with \(u_h^0\) and \(q_h^0\) available, \(p_h^0\) is obtained from \eqref{2.15b}. Once \(u_h^0\), \(q_h^0\), and \(p_h^0\) are determined, the penalization parameters \(\tau_{pq}^0\) and \(\tau_{pu}^0\) can be obtained by solving a $2\times 2$ linear system, \eqref{2.15d}–\eqref{2.15e}. At each time step, the method proceeds as follows:
\begin{itemize}
	\item a coupled nonlinear system is solved at the intermediate stage,
	\item the solution $u_h$ is updated,
	\item and remaining variables and penalty parameters are updated sequentially from linear solves.
\end{itemize}
This structure  reduces the number of nonlinear solves per time step compared to the previous framework.

The overall procedure is summarized in a flowchart in Fig. \ref{fig:1}. For IRK4, the only modification is that the midpoint stage is replaced by multiple intermediate stage solutions, after which the update and recovery steps proceed in the same way.
\begin{figure}[h]
	\centering
	\includegraphics[width=0.9\textwidth]{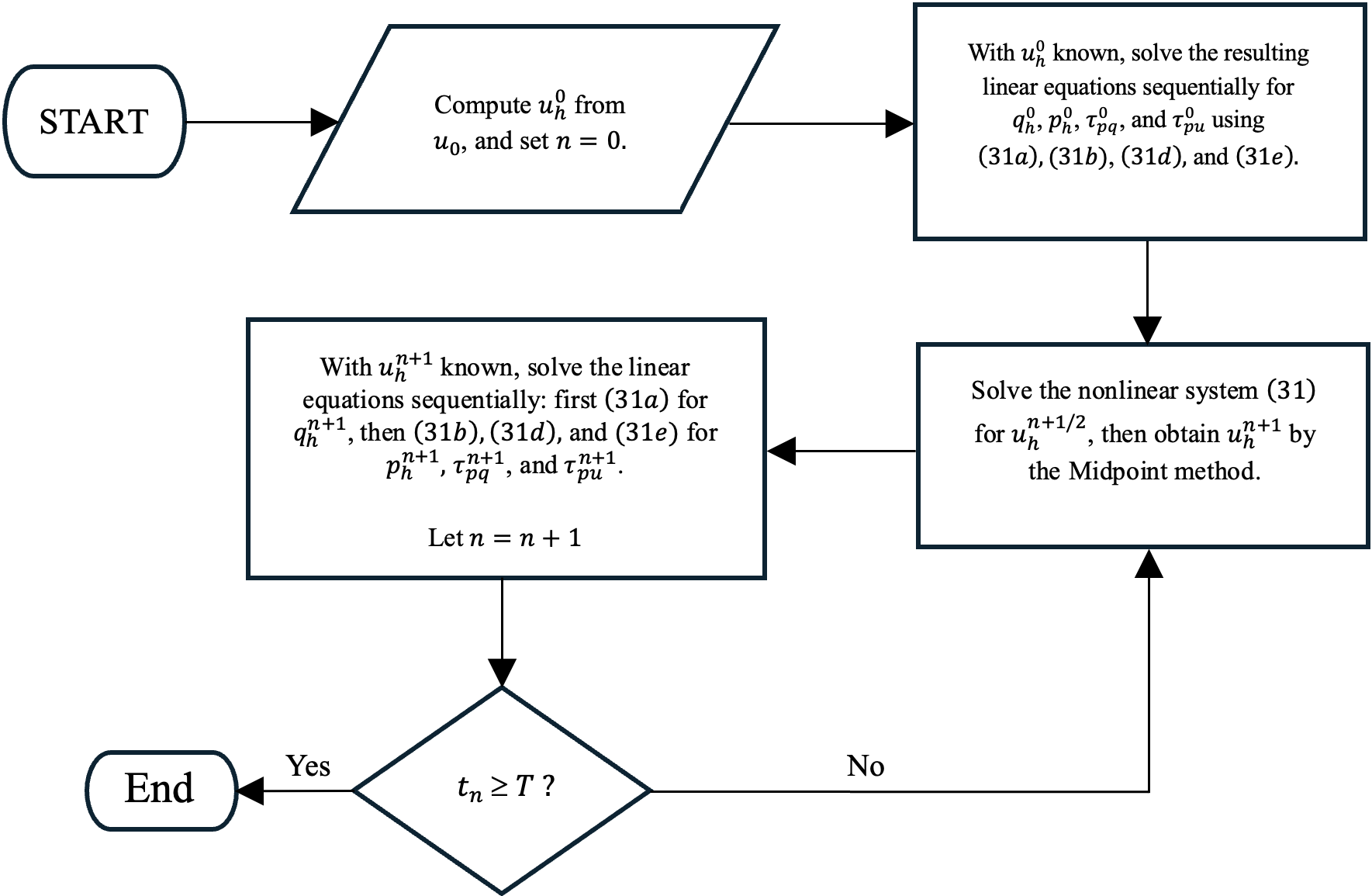}
	\caption{Flowchart illustrating the implementation of the conservative DG for the gKdV equation with the Implicit Midpoint rule.}
	\label{fig:1}
\end{figure}

\subsection{Implementation of conservative DG method for the HS-KdV system}
In this section, we again describe the implementation of the conservative DG method for the HS-KdV system using the Implicit Midpoint rule for ease of presentation. The fully discrete scheme can be written in the following matrix form:
\begin{subequations}\label{MV-CKdV}
	\begin{align}
		&  M[q_h] + (D+A)[u_h] = 0, \label{3.30a}\\
		\label{3.30b}
		& M[p_h] + (D+A)[q_h] - 3M[u_h^2] = 0, \\
		\label{3.30c}
		& M[u_h] + \frac{\Delta t}{2}(D+A)\left(a[p_h]+b[\Pi v_h^2]\right) - M[\bar{u}_h]
		+ a \frac{\Delta t}{2}\tau_{pu} J[u_h]
		+ a \frac{\Delta t}{2}\tau_{pv} J[v_h]=0,\\
		\label{3.30d}
		&  M[w_h] + (D+A)[v_h] = 0,\\
		\label{3.30e}
		&  M[r_h] + (D+A)[w_h] = 0,\\
		\label{3.30f}
		&  M[v_h] - \frac{\Delta t}{2}(D+A)[r_h] - M[\bar{v}_h] + \frac{\Delta t}{2}M[3 u_h w_h] = 0,\\
		\label{3.30g}  
		\begin{split}
			& \sum_{i=1}^N \Big[ 2a\tau_{pu}\jmp{u_h}^2 + 2a\tau_{pv}\jmp{u_h}\jmp{v_h} -2b\jmp{v_h^2}\{u_h\} + 2b\jmp{u_h}\{\Pi v_h^2\} -2b\jmp{u_h}\{v_h^2\} \\&\qquad -2a\jmp{V(u_h)}+2a\jmp{u_h}\{\Pi f(u_h)\} + 4b\jmp{v_h}\{\Pi (u_h v_h)\} \Big](x_i) = 0,
		\end{split}\\
		\label{3.30h}
		\begin{split}
			&   \sum_{i=1}^N \Big[\left(a\jmp{p_h} + b\jmp{\Pi v_h^2}\right)\left(a\tau_{pu}\jmp{u_h}+a\tau_{pv}\jmp{v_h} \right)+a\jmp{p_h}\left(\tau_{pu}\jmp{u_h}+\tau_{pv}\jmp{v_h}\right)\\
			&\qquad+2b\Theta(q_h,w_h,v_h)-2b\Theta(r_h,u_h,v_h)-2b\Theta(u_h,w_h,w_h)-\Theta(p_h,v_h,v_h)\Big] \\
            &+ 6b [\Pi u_h^2]^T M [\Pi (w_h v_h)] -6b [\Pi (u_h w_h)]^T M [\Pi (u_h v_h)]= 0.
		\end{split}
	\end{align}
\end{subequations}
where the function $\Theta$ is defined in \eqref{eq:theta}.

The solution procedure in \eqref{MV-CKdV} begins with the initial data. Since \(u_h^0\) and \(v_h^0\) are known, \(q_h^0\), \(p_h^0\), \(w_h^0\), and \(r_h^0\) are computed successively from \eqref{3.30a}, \eqref{3.30b}, \eqref{3.30d}, and \eqref{3.30e}. The two penalization parameters \(\tau_{pu}^0\) and \(\tau_{pv}^0\) are then determined from the $2\times 2$ linear system \eqref{3.30g}-\eqref{3.30h}. At the intermediate time level \(t_{n+\frac{1}{2}}\), the coupled equations \eqref{3.30a}-\eqref{3.30h} form a nonlinear system; the subsequent update at $t_{n+1}$ again reduces to a sequence of linear solves. 
Note that the number of equations in \eqref{3.30a}--\eqref{3.30f} is $6N(k+1)$, and  enforcing the additional constraints \eqref{3.30g} and \eqref{3.30h} for the conservation of energy and Hamiltonian only increases the size of the algebraic system by two.

\section{Acknowledgment}
The authors thank Rebecca Pereira for her contributions to the earlier developments of the conservation formulation.
This work was supported in part by the National Science Foundation under Grant DMS-2309670.



\end{document}